\newcommand{\zz}{\mathbb{Z}}
\newcommand{\qq}{\mathbb{Q}}
\newcommand{\pp}{\mathbb{P}}
\newcommand{\rr}{\mathbb{R}}
\newcommand{\cc}{\mathbb{C}}
\newcommand{\C}{\mathcal{C}}
\newcommand{\D}{\mathcal{D}}
\renewcommand{\H}{\mathcal{H}}
\newcommand{\M}{\mathcal{M}}
\renewcommand{\P}{\mathcal{P}}
\newcommand{\Q}{\mathcal{Q}}
\newcommand{\T}{\mathcal{T}}
\newcommand{\V}{\mathcal{V}}
\newcommand{\im}{\mathrm{Im}}
\newcommand{\hh}{\mathfrak{H}}
\newcommand{\xx}{\mathbf{x}}
\newcommand{\E}{\mathcal{E}}
\renewcommand{\L}{\mathcal{L}}
\newcommand{\N}{\mathcal{N}}
\newcommand{\R}{\mathcal{R}}
\newcommand{\indep}{\perp\!\!\!\perp}
\newcommand{\ket}[1]{|#1\rangle}
\newcommand{\bra}[1]{\langle#1|}
\newcommand{\Id}{\mathrm{Id}}
\newcommand{\Sym}{\mathbb{S}}
\newcommand{\qcmi}[3]{I(#1 : #2 \,|\, #3)}
\newcommand{\dd}{\mathfrak{D}}
\DeclareMathOperator{\proj}{Proj}
\DeclareMathOperator{\tr}{Tr}
\DeclareMathOperator{\diag}{diag}
\DeclareMathOperator{\supp}{Supp}
\DeclareMathOperator{\gm}{GM}
\DeclareMathOperator{\gv}{GV}
\DeclareMathOperator{\argmax}{argmax}
\DeclareMathOperator{\argmin}{argmin}
\newcommand{\minus}{\scalebox{0.75}[1.0]{$-$}}
\newtheorem{theorem}{Theorem}[section]
\newtheorem{lemma}[theorem]{Lemma}
\newtheorem{proposition}[theorem]{Proposition}
\theoremstyle{definition}
\newtheorem{definition}[theorem]{Definition}
\newtheorem{example}[theorem]{Example}
\newtheorem{question}[theorem]{Question}
\newtheorem{problem}[theorem]{Problem}
\newtheorem{construction}[theorem]{Construction}
\theoremstyle{remark}
\newtheorem{remark}[theorem]{Remark}
\newtheorem*{claim}{Claim}
\numberwithin{equation}{section}
\title{Algebraic Geometry of Quantum Graphical Models}
\author{Eliana Duarte}
\address{Universidade do Porto, Rua do Campo Alegre 687, 4169-007 Porto, Portugal}
\email{eliana.gelvez@fc.up.pt}
\author{Dmitrii Pavlov}
\address{Max Planck Institute for Mathematics in the Sciences, Inselstrasse 22, 04103 Leipzig, Germany}
\email{dmitrii.pavlov@mis.mpg.de}
\author{Maximilian Wiesmann}
\address{Max Planck Institute for Mathematics in the Sciences, Inselstrasse 22, 04103 Leipzig, Germany}
\email{wiesmann@mis.mpg.de}
\begin{document}

\maketitle

\begin{abstract}
    Algebro-geometric methods have proven to be very successful in the study of graphical models in statistics. In this paper we introduce the foundations to carry out a similar study of their quantum counterparts. These quantum graphical models are families of quantum states satisfying certain locality or correlation conditions encoded by a graph. We lay out several ways to associate an algebraic variety to a quantum graphical model. The classical graphical models can be recovered from most of these varieties by restricting to quantum states represented by diagonal matrices. We study fundamental properties of these varieties and provide algorithms to compute their defining equations. Moreover, we study quantum information projections to quantum exponential families defined by graphs and prove a quantum analogue of Birch's Theorem.
\end{abstract}

{\noindent \footnotesize \textbf{Keywords:} graphical model, quantum Markov network, quantum exponential family, quantum conditional mutual information, toric variety}

{\noindent \footnotesize \textbf{MSC2020:} 14Q99, 81P45, 62R01}

\section{Introduction}

The goal of this paper is to consider \emph{quantum graphical models} \cite{leifer2008quantum} from the point of view of algebraic geometry with the aim of offering a new perspective on open problems in quantum 
information theory. Roughly speaking, when passing from the classical to the quantum setting, we replace probability distributions
with \emph{density matrices}, with the classical case being recovered when these matrices are diagonal. The graph describes a physical quantum system with nodes representing 
subsystems. Such models have also been coined \emph{quantum Markov networks} in the quantum 
information theory literature \cite{brownQMNandCommHams, di2020recoverability, 
poulin2011markov} and have applications to quantum many-body systems, quantum error correction and 
the study of entanglement. 
This article describes different approaches to obtain an algebraic variety associated to a quantum graphical model.\par 

In algebraic statistics, \emph{graphical models} \cite{lauritzenGraphical} play a prominent role, with applications to, among others, phylogenetics, causal inference and medical diagnosis \cite{koller2009,maathuis2018}. Such a model arises from a graph imposing certain \emph{conditional independence} statements on random variables represented by nodes in the graph. As an example \cite[Ex.\ 1.29]{pachter2005algebraic}, consider the chain graph $G$ on three vertices

\begin{center}
    \begin{tikzpicture}
        \begin{scope}[every node/.style={circle,thick,draw}]
            \node (X) at (0,0) {X};
            \node (Y) at (2,0) {Y};
            \node (Z) at (4,0) {Z};
        \end{scope}

        \begin{scope}[
              every edge/.style={draw=black,very thick}]
            \path [-] (X) edge (Y);
            \path [-] (Y) edge (Z);
        \end{scope}
    \end{tikzpicture}
\end{center}
with binary random variables $X,Y$ and $Z$. Then $G$ encodes the conditional independence statement $X \indep Z \mid Y$ giving rise to a statistical model described by the algebraic variety
\[
    \M_G = \V(p_{001}p_{100} - p_{000}p_{101}, p_{011}p_{110} - p_{010}p_{111}) \subseteq \pp^7 = \proj(\cc[p_{000},\dots,p_{111}]).
\]
More generally, graphical models for discrete and Gaussian random variables are algebraic varieties. This algebraic perspective advances both the theoretical foundations for these statistical models and the development of new computational methods for the practical use. At the core of these 
advances lies the understanding of the implicit and parametric 
descriptions of the model and its likelihood geometry. This proved to be useful in
model selection, causal discovery, and  maximum likelihood estimation \cite{evans2020,gms2006,shaowei2014,uhler2013}.\par

Motivated by this, we find ways to associate algebraic varieties to quantum graphical models and make progress towards understanding their implicit and parametric descriptions. This leads to a number of interesting varieties and new computational challenges. The role of the maximum likelihood estimator is taken by the \emph{quantum information projection}. We study its geometry for \emph{quantum exponential families} of commuting Hamiltonians, e.g.\ Hamiltonians arising in the context of \emph{graph states}. \par 

The paper is organised as follows. In Section \ref{sec:basic}, we introduce some basic notions of 
quantum information theory. In Section~\ref{sec:qmt}, we introduce the \emph{quantum conditional mutual
information (QCMI) variety} and the \emph{Petz variety}. The former is obtained from studying the structure of quantum states satisfying \emph{strong subadditivity} with equality \cite{hayden2004structure}. The latter is related to the Petz recovery map \cite{petz1986sufficient} and the solution of the Quantum Marginal Problem for the $3$-chain \cite{tyc2015quantum}. In both instances, the graph imposes quantum conditional independence statements, in direct analogy to the classical case. In Section \ref{sec:QGMandGibbs}, we suggest a notion of a
quantum graphical model as the \emph{Gibbs manifold} \cite{pavlov2022gibbs} of a certain family of Hamiltonians and consider the smallest variety that contains it (known as the \emph{Gibbs variety}). Here, the graph encodes a locality structure imposed on the Hamiltonians \cite{brownQMNandCommHams}. This section also includes new results on Gibbs varieties. Finally, we present results on quantum exponential families coming from \emph{stabiliser codes} \cite[\S 10]{nielsen2002quantum}; one particular example are families of Hamiltonians associated to graph states \cite{graphStates}. We study the quantum information projection \cite{niekamp2013computing} and relate it to the classical theory of maximum likelihood estimation, proving a generalisation of Birch's Theorem.\par 
Throughout the sections we provide algorithms to compute the varieties appearing in our study and present computational examples. Those are implemented in \texttt{Julia} making use of the computer algebra package \texttt{Oscar.jl} \cite{OSCAR} and the numerical algebraic geometry tool \texttt{HomotopyContinuation.jl} \cite{htpy2018}; it is available at \cite{mathrepo}.

\section{Basic Notions of Quantum Information Theory} \label{sec:basic}
In this section we collect some fundamental notions of quantum information theory. The reader is referred to \cite{landsberg2019},\cite[Part III]{nielsen2002quantum} for a much more detailed introduction to the subject.\par 
A \emph{quantum state} on $N$ \emph{qudits} is represented by a vector $\ket{\psi}\in \H = \H_1\otimes\dots\otimes\H_N$ of unit length, where $\H_i$ is the Hilbert space $\H_i\cong \cc^d,~ i=1,\dots,N$. Here, we make use of the Dirac notation, i.e.\ $\ket{\psi}$ denotes a column vector and $\bra{\psi}$ its complex conjugate transpose. In the case $N=1$ and $d=2$, $\ket{\psi}$ is called a \emph{qubit} and this will be our primary focus. \par 
In this paper we often consider a simple, undirected graph $G=(V,E)$ on $N$ (ordered) vertices. The graph $G$ represents a quantum system with Hilbert space $\H$ and each vertex $v_i\in V$ corresponds to a physical subsystem in $\H_i\cong \cc^{d_i}$. \par 
An ensemble of quantum states is a collection $\{p_i, \ket{\psi_i}\}_i$ where $\{p_i\}_i$ is a discrete probability distribution. Such an ensemble is described by its \emph{density matrix}
\[
    \rho = \sum_i p_i \ket{\psi_i}\bra{\psi_i}.
\]
Equivalently, we can characterise density matrices as positive semidefinite endomorphisms on $\H$ with unit trace. From now on, we will use the terms ``quantum state'' and ``density matrix'' interchangeably. The set of all density matrices on $\H$ is denoted by $\D(\H)$.\par 

Let $\rho_{AB}$ be a bipartite quantum state, i.e.\ a density operator on $\H_A\otimes \H_B$. We define the \emph{partial trace} over the $B$-system on elementary tensors via
\[
    \tr_B(\ket{a_i}\bra{a_j}\otimes \ket{b_k}\bra{b_l}) \coloneqq \ket{a_i}\bra{a_j}\cdot \tr(\ket{b_k}\bra{b_l}) = \ket{a_i}\bra{a_j}\cdot \langle b_l| b_k\rangle,
\]
where $\ket{a_i},\ket{a_j}\in\H_A$ and $\ket{b_k},\ket{b_l}\in \H_B$ and extend this operation linearly to $\rho_{AB}$. Note that $\tr_B\rho_{AB}$ is a density operator on $\H_A$ and therefore we use the notation $\rho_A\coloneqq \tr_B\rho_{AB}$. One can think of the partial trace operation as the quantum analogue of marginalisation in statistics. Physically speaking, $\rho_A$ describes the state of the subsystem $A$ of the composite system $AB$.

\begin{example}
    Set $\ket{0} \coloneqq (1,0)^T, \ket{1} \coloneqq (0,1)^T$ as a basis of $\cc^2$ (this basis is called \emph{computational basis} in the context of quantum information theory); we also adapt the notation to write $\ket{ij}$ for $\ket{i}\otimes\ket{j},~ i,j\in\{0,1\}$. Consider the \emph{Bell state}
    \[
        \scalebox{0.94}{$\rho_{AB} = \left(\frac{1}{\sqrt{2}}(\ket{00} + \ket{11})\right)\left(\frac{1}{\sqrt{2}}(\bra{00} + \bra{11})\right) = \frac{1}{2}(\ket{00}\bra{00} + \ket{00}\bra{11} + \ket{11}\bra{00} + \ket{11}\bra{11})$}
    \]
    on $\cc^2\otimes \cc^2$. Then the partial trace over $B$ is computed as
    \[
        \scalebox{0.93}{$\tr_B\rho_{AB} = \frac{1}{2}(\ket{0}\bra{0} \langle 0|0\rangle + \ket{0}\bra{1} \langle 1|0\rangle + \ket{1}\bra{0} \langle 0|1\rangle + \ket{1}\bra{1} \langle 1|1\rangle) = \frac{1}{2}(\ket{0}\bra{0} + \ket{1}\bra{1}) = \frac{\Id_2}{2},$}
    \]
    called the \emph{maximally mixed state}, meaning $\rho_{AB}$ is \emph{maximally entangled} \cite[\S 2]{nielsen2002quantum}.
\end{example}

Finally, when we speak of a \emph{Hamiltonian} $H$, we simply mean a real symmetric matrix. The reason we restrict to this class (and do not consider Hermitian matrices as would be natural in many contexts in quantum physics) is that symmetric matrices form a linear space, while the set of Hermitian matrices is not an algebraic variety. Likewise, we consider density matrices to be real positive semidefinite (PSD). Note that $\exp(H)$ is positive definite and thus can (up to normalisation) be regarded as a quantum state.\par 
The ambient space of the algebraic varieties we consider is $\Sym^n\cong \cc^{n(n+1)/2}$, each point being a complex symmetric matrix. To recover a specific quantum model, we intersect the variety with the PSD cone and the hyperplane of trace one matrices.

\section{Quantum Graphical Models on Trees} \label{sec:qmt}

It is an important and mostly open problem in quantum information theory to describe the set of compatible density matrices on subsystems of a composite system. This is known as the Quantum Marginal Problem. 
\begin{problem}[Quantum Marginal Problem]
    Let $S=\{1,\dots,N\}$ be a composite system on $N$ qudits and suppose we are given density matrices $\rho_{S_1},\dots,\rho_{S_n}$ of $n$ subsystems $S_1,\dots,S_n\subseteq S$. What conditions do $\rho_{S_1},\dots,\rho_{S_n}$ have to satisfy to arise from $\rho_S$ as $\rho_{S_i} = \tr_{S\setminus S_i}\rho_S$?
\end{problem}

So far, for general graphs, this problem has only been solved in the case of disjoint subsystems $S_i$. See \cite{tyc2015quantum} for a survey. However, for trees it is possible to reconstruct a quantum state from its two-body marginals \cite{di2021complexity,di2020recoverability}. This can be done using algebraic methods and motivates the algebro-geometric notions of quantum graphical models we introduce in this section. Associating algebraic varieties to quantum graphical models and studying their defining equations might open a new way of attacking this problem.\par

\subsection{Quantum conditional mutual information varieties}
In this section we give the definition of quantum conditional mutual information (QCMI) and collect some of its properties. The vanishing of QCMI should be thought of as a quantum analogue to conditional independence and gives rise to an algebraic variety that we call the \emph{QCMI variety}. 


The \emph{von Neumann entropy} $S(\rho)$ of a quantum state $\rho$ is $S(\rho)\coloneqq -\tr(\rho\log\rho)$ and is a straightforward generalisation of the classical Shannon entropy; here, the logarithm has base two. Let $\rho_{ABC}$ be a tripartite state; then the \emph{quantum conditional mutual information} between $A$ and $C$ given $B$ is
\[
    I(A:C\,|\,B)\coloneqq S(AB) + S(BC) - S(ABC) -S(B),
\]
where $S(ABC)=S(\rho_{ABC}),S(AB)=S(\tr_C\rho_{ABC})$ etc. Note that if one replaces the von Neumann entropy with the Shannon entropy in the above definition, one obtains the classical conditional mutual information $I_{\text{cl}}(A:C\,|\,B)$ between random variables $A$ and $C$ given $B$. Its vanishing $I_{\text{cl}}(A:C\,|\,B)=0$ is well-known to be equivalent to the conditional independence $A\indep C\mid B$ and leads to two possible
different factorisations of the joint probability distribution $p(a,b,c)$ \cite{hayden2004structure}. The vanishing of QCMI of a tripartite system behaves similarly,
implying  a more involved factorisation of the density matrix of the tripartite system (see Construction \ref{rem:qfactorisation}).\par 
The following constitutes a quantum analogue to the conditional independence axioms for probability distributions, see \cite[Thm.\ 4.5]{leifer2008quantum}.

\begin{proposition}
    \label{prop:qcmiAxioms}
    Let $S$ be a composite quantum system with disjoint subsystems $A, B, C, D \subseteq S$. Then the following implications hold:
    \begin{enumerate}
        \item $\qcmi{A}{C}{B} = 0 \Rightarrow \qcmi{C}{A}{B} = 0$ (Symmetry),
        \item $\qcmi{A}{CD}{B} = 0 \Rightarrow \qcmi{A}{C}{B} = 0$ (Decomposition),
        \item $\qcmi{A}{CD}{B} = 0 \Rightarrow \qcmi{A}{C}{BD} = 0$ (Weak Union),
        \item $\qcmi{A}{B}{CD} = 0 \text{ and } \qcmi{A}{D}{C} = 0 \Rightarrow \qcmi{A}{BD}{C} = 0$ (Contraction).
    \end{enumerate}
\end{proposition}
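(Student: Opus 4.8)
The plan is to reduce all four implications to a single algebraic identity---a chain rule for QCMI---together with the nonnegativity of QCMI, i.e.\ strong subadditivity. Symmetry requires no work: expanding the definition gives $\qcmi{A}{C}{B} = S(AB) + S(BC) - S(ABC) - S(B)$, and since the von Neumann entropy of a state is unchanged under relabeling the tensor factors of its support, every summand is invariant when $A$ and $C$ are interchanged. Hence $\qcmi{A}{C}{B}$ and $\qcmi{C}{A}{B}$ are literally the same function of $\rho$, and Symmetry follows.

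For the remaining three axioms, the step I would establish first is the chain rule
\[
  \qcmi{A}{CD}{B} = \qcmi{A}{C}{B} + \qcmi{A}{D}{BC}.
\]
This is proved by expanding both summands on the right via the definition and noticing that the terms $S(ABC)$ and $S(BC)$ each occur once with either sign and cancel, leaving exactly $S(AB) + S(BCD) - S(ABCD) - S(B) = \qcmi{A}{CD}{B}$. Permuting the roles of the subsystems in the same telescoping computation yields the two companion identities $\qcmi{A}{CD}{B} = \qcmi{A}{D}{B} + \qcmi{A}{C}{BD}$ and $\qcmi{A}{BD}{C} = \qcmi{A}{D}{C} + \qcmi{A}{B}{CD}$, so I would verify the chain rule once and obtain the other groupings by relabeling.

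With these identities in hand, Decomposition and Weak Union follow from strong subadditivity, which guarantees $\qcmi{X}{Y}{Z}\ge 0$ for every grouping. For Decomposition, the first identity writes $\qcmi{A}{CD}{B}$ as a sum of the two nonnegative quantities $\qcmi{A}{C}{B}$ and $\qcmi{A}{D}{BC}$; if the left-hand side vanishes, each summand must vanish, giving $\qcmi{A}{C}{B}=0$. Weak Union is the same argument applied to the companion identity, extracting $\qcmi{A}{C}{BD}=0$. Contraction is even more direct: the third identity expresses $\qcmi{A}{BD}{C}$ as the sum of the two quantities $\qcmi{A}{D}{C}$ and $\qcmi{A}{B}{CD}$ that are \emph{assumed} to vanish, so here not even nonnegativity is needed.

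The only substantive ingredient is strong subadditivity (the Lieb--Ruskai inequality), which I would invoke as a known theorem rather than reprove; everything else is bookkeeping of cancelling entropy terms. In that sense there is no serious obstacle at the level of this proposition---the content lies entirely in the clean reduction via the chain rule. One could instead route the proof through the structural characterization of states saturating strong subadditivity from \cite{hayden2004structure}, turning each axiom into a statement about the induced direct-sum decomposition of $\H_B$; but that machinery is far heavier than required, so I would keep the entropic chain-rule argument above.
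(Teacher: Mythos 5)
Your proof is correct: the chain rule $\qcmi{A}{CD}{B} = \qcmi{A}{C}{B} + \qcmi{A}{D}{BC}$ (and its relabelings) combined with nonnegativity of QCMI via strong subadditivity is exactly the standard argument, and it is the one given in the reference \cite{leifer2008quantum} that the paper cites for this proposition without reproving it. No gaps; the telescoping cancellations and the use of SSA are all as they should be.
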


The QCMI is closely related to the celebrated \emph{strong subadditivity} (SSA) inequality \cite{lieb1973proof}
\[
    S(ABC) + S(B) \leq S(AB) + S(BC).
\]
The case of equality in SSA, i.e.\ $I(A:C\,|\,B)=0$, has been intensively studied; the main result is the following Theorem from \cite{hayden2004structure}.

\begin{theorem}\label{thm:central}
    A quantum state $\rho_{ABC}$ on $\H_A\otimes \H_B\otimes\H_C$ satisfies SSA with equality if and only if there exists a decomposition of $\H_B$ as
    \[
        \H_B = \bigoplus_j \H_{b_j^L}\otimes \H_{b_j^R}
    \]
    such that $\rho_{ABC}$ decomposes as 
    \[
        \rho_{ABC} = \bigoplus_j q_j \rho_{Ab_j^L}\otimes \rho_{b_j^RC},
    \]
    where $\{q_j\}_j$ is a probability distribution and $\rho_{Ab_j^L},\rho_{b_j^RC}$ are states on $\H_A\otimes \H_{b^L_j}$ and $\H_{b^R_j}\otimes \H_C$, respectively.
\end{theorem}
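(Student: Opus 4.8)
The plan is to prove both implications, with the reverse (structural) direction being the substantial one. For the ``if'' direction I would argue by a direct entropy computation. Suppose $\rho_{ABC}=\bigoplus_j q_j\,\rho_{Ab_j^L}\otimes\rho_{b_j^RC}$ relative to $\H_B=\bigoplus_j\H_{b_j^L}\otimes\H_{b_j^R}$. Since the summands are supported on mutually orthogonal subspaces, the von Neumann entropy splits as a Shannon term plus a convex combination, $S(ABC)=H(\{q_j\})+\sum_j q_j\big(S(\rho_{Ab_j^L})+S(\rho_{b_j^RC})\big)$, using additivity of $S$ on tensor products inside each block. Taking partial traces preserves the block structure, so I would compute $S(AB)$, $S(BC)$ and $S(B)$ the same way and check that upon forming $S(AB)+S(BC)-S(ABC)-S(B)$ the Shannon terms and all the block entropies cancel, yielding $\qcmi{A}{C}{B}=0$.

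For the ``only if'' direction the key is to recast the equality case as an equality in the monotonicity of quantum relative entropy. I would first record the identity $S(BC)-S(B)=-D(\rho_{BC}\,\|\,\rho_B\otimes\Id_C)$ together with its analogue $S(ABC)-S(AB)=-D(\rho_{ABC}\,\|\,\rho_{AB}\otimes\Id_C)$, which turns SSA into the statement that relative entropy is non-increasing under the partial trace $\tr_A$ applied to the pair $(\rho_{ABC},\,\rho_{AB}\otimes\Id_C)$, since $\tr_A\rho_{ABC}=\rho_{BC}$ and $\tr_A(\rho_{AB}\otimes\Id_C)=\rho_B\otimes\Id_C$. Equality in SSA is then exactly equality in data processing, and by Petz's theorem on sufficiency this is equivalent to the existence of a recovery channel reconstructing $\rho_{ABC}$ from $\rho_{BC}$; concretely, the Petz map for $\tr_A$ gives the reconstruction formula $\rho_{ABC}=\rho_{AB}^{1/2}\rho_B^{-1/2}\rho_{BC}\rho_B^{-1/2}\rho_{AB}^{1/2}$, where each factor is embedded into $\H_A\otimes\H_B\otimes\H_C$ by tensoring with the appropriate identity. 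Passing to logarithms, this reconstruction is equivalent to the operator equation $\log\rho_{ABC}+\log\rho_B=\log\rho_{AB}+\log\rho_{BC}$.

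The final and hardest step is to extract the claimed direct-sum-of-tensor-products decomposition from this operator identity. The equation forces $\log\rho_{AB}-\log\rho_B$ (an operator on $\H_A\otimes\H_B$) and $\log\rho_{BC}-\log\rho_B$ (an operator on $\H_B\otimes\H_C$) to commute inside $\H_A\otimes\H_B\otimes\H_C$, with the commutation mediated entirely through the $B$-factor. I would analyze the $\ast$-algebra these operators generate on $\H_B$: simultaneously block-diagonalising it produces the central decomposition $\H_B=\bigoplus_j\H_{b_j^L}\otimes\H_{b_j^R}$ in which the $A$-side operators act nontrivially only on the left tensor factors and the $C$-side operators only on the right ones. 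Feeding this back into the reconstruction formula then exhibits $\rho_{ABC}$ as the asserted block sum $\bigoplus_j q_j\,\rho_{Ab_j^L}\otimes\rho_{b_j^RC}$. The main obstacle is precisely this commutant analysis -- turning the abstract commutation relation into the explicit bipartite tensor splitting of each block of $\H_B$ -- where a Koashi--Imoto-type argument (or the careful operator-algebraic reasoning of \cite{hayden2004structure}) is required; the entropy bookkeeping in the other steps is routine by comparison.
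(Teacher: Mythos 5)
The paper offers no proof of this statement: it is quoted directly from \cite{hayden2004structure}, so there is nothing internal to compare your argument against. Your outline faithfully reconstructs the argument of that reference (the ``if'' direction by blockwise entropy bookkeeping; the ``only if'' direction via SSA as monotonicity of relative entropy under $\tr_A$, Petz's sufficiency theorem for the equality case, and the finite-dimensional $*$-algebra decomposition of $\H_B$); the only gloss is that the step from the recovery formula to the identity $\log\rho_{ABC}+\log\rho_B=\log\rho_{AB}+\log\rho_{BC}$ is itself a nontrivial equivalence established in that paper (matrix logarithms do not distribute over products of non-commuting factors), not a formal rewriting.
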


\begin{construction}[QCMI variety of the 3-chain graph] \label{rem:qfactorisation}
   The following reformulation of Theorem~\ref{thm:central} plays a central role in the construction of the QCMI variety.
    Setting $\Lambda_{AB}\coloneqq \bigoplus_j q_j\rho_{Ab_j^L}\otimes \Id_{b_j^RC}$ and $\Lambda_{BC}\coloneqq \bigoplus_j \Id_{Ab_j^L}\otimes \rho_{b_j^RC}$, we arrive at
\begin{equation}
    \label{equ:param3path}
    I(A:C\,|\,B)=0 \text{ if and only if } \rho_{ABC} = \Lambda_{AB}\Lambda_{BC} \text{ with } \left[\Lambda_{AB},\Lambda_{BC}\right] = 0,
\end{equation}
where $\Lambda_{AB},\Lambda_{BC}$ are symmetric matrices acting on $\H_A\otimes\H_B\otimes\H_C$ and $\Lambda_{AB}$ acts as identity on $\H_C$, and, likewise, $\Lambda_{BC}$ acts as identity on $\H_A$ \cite{brownQMNandCommHams}. Then the right hand side of (\ref{equ:param3path}) gives rise to a parametrisation of a variety we denote by $\Q_{\qcmi{A}{C}{B}}$.
\end{construction}

\begin{example}[$\Q_{\qcmi{A}{C}{B}}$ in the qubit case]
    \label{eg:qcmiVar3path}
    Let $\H_A\cong\H_B\cong\H_C\cong\cc^2$ and write $\Lambda_{AB} = M\otimes\Id_2,~\Lambda_{BC} = \Id_2\otimes N$ for $M,N\in\Sym^4$. In this case, the parametrisation of $\Q_{\qcmi{A}{C}{B}}$ induced by the right hand side of \eqref{equ:param3path} sends 
    \[
        M=\begin{pmatrix}
            x_1 & x_2 & x_3 & x_4 \\
            x_2 & x_5 & x_6 & x_7 \\
            x_3 & x_6 & x_8 & x_9 \\
            x_4 & x_7 & x_9 & x_{10} \\
        \end{pmatrix} \text{ and  }
        N = \begin{pmatrix}
            y_1 & y_2 & y_3 & y_4 \\
            y_2 & y_5 & y_6 & y_7 \\
            y_3 & y_6 & y_8 & y_9 \\
            y_4 & y_7 & y_9 & y_{10} \\
        \end{pmatrix}
    \]
    to the matrix
    \[
    \scalebox{0.728}{$\begin{pmatrix}
        x_{1} y_{1} + x_{2} y_{4} & x_{1} y_{2} + x_{2} y_{5} & x_{1} y_{4} + x_{2} y_{6} & x_{1} y_{7} + x_{2} y_{9} & x_{4} y_{1} + x_{7} y_{4} & x_{4} y_{2} + x_{7} y_{5} & x_{4} y_{4} + x_{7} y_{6} & x_{4} y_{7} + x_{7} y_{9} \\
        x_{1} y_{2} + x_{2} y_{7} & x_{1} y_{3} + x_{2} y_{8} & x_{1} y_{5} + x_{2} y_{9} & x_{1} y_{8} + x_{2} y_{10} & x_{4} y_{2} + x_{7} y_{7} & x_{4} y_{3} + x_{7} y_{8} & x_{4} y_{5} + x_{7} y_{9} & x_{4} y_{8} + x_{7} y_{10} \\
        x_{2} y_{1} + x_{3} y_{4} & x_{2} y_{2} + x_{3} y_{5} & x_{2} y_{4} + x_{3} y_{6} & x_{2} y_{7} + x_{3} y_{9} & x_{5} y_{1} + x_{8} y_{4} & x_{5} y_{2} + x_{8} y_{5} & x_{5} y_{4} + x_{8} y_{6} & x_{5} y_{7} + x_{8} y_{9} \\
        x_{2} y_{2} + x_{3} y_{7} & x_{2} y_{3} + x_{3} y_{8} & x_{2} y_{5} + x_{3} y_{9} & x_{2} y_{8} + x_{3} y_{10} & x_{5} y_{2} + x_{8} y_{7} & x_{5} y_{3} + x_{8} y_{8} & x_{5} y_{5} + x_{8} y_{9} & x_{5} y_{8} + x_{8} y_{10} \\
        x_{4} y_{1} + x_{5} y_{4} & x_{4} y_{2} + x_{5} y_{5} & x_{4} y_{4} + x_{5} y_{6} & x_{4} y_{7} + x_{5} y_{9} & x_{6} y_{1} + x_{9} y_{4} & x_{6} y_{2} + x_{9} y_{5} & x_{6} y_{4} + x_{9} y_{6} & x_{6} y_{7} + x_{9} y_{9} \\
        x_{4} y_{2} + x_{5} y_{7} & x_{4} y_{3} + x_{5} y_{8} & x_{4} y_{5} + x_{5} y_{9} & x_{4} y_{8} + x_{5} y_{10} & x_{6} y_{2} + x_{9} y_{7} & x_{6} y_{3} + x_{9} y_{8} & x_{6} y_{5} + x_{9} y_{9} & x_{6} y_{8} + x_{9} y_{10} \\
        x_{7} y_{1} + x_{8} y_{4} & x_{7} y_{2} + x_{8} y_{5} & x_{7} y_{4} + x_{8} y_{6} & x_{7} y_{7} + x_{8} y_{9} & x_{9} y_{1} + x_{10} y_{4} & x_{9} y_{2} + x_{10} y_{5} & x_{9} y_{4} + x_{10} y_{6} & x_{9} y_{7} + x_{10} y_{9} \\
        x_{7} y_{2} + x_{8} y_{7} & x_{7} y_{3} + x_{8} y_{8} & x_{7} y_{5} + x_{8} y_{9} & x_{7} y_{8} + x_{8} y_{10} & x_{9} y_{2} + x_{10} y_{7} & x_{9} y_{3} + x_{10} y_{8} & x_{9} y_{5} + x_{10} y_{9} & x_{9} y_{8} + x_{10} y_{10} \\
    \end{pmatrix}$}.
    \]
    This results in a twelve-dimensional variety inside $\Sym^8$ cut out by 735 equations in degrees one to five; the degree of $\Q_{\qcmi{A}{C}{B}}$ is 110. As all of these equations are homogeneous, $\Q_{\qcmi{A}{C}{B}}$ can be considered as a subvariety of $\pp^{35} = \proj(\cc[z_1,\dots,z_{36}])$. Among these equations only two are linear:
    \[
        z_{14} - z_{18} + z_{23} - z_{29}=0,~
z_{12} - z_{16} - z_{25} + z_{31}=0,
    \]
    and just one has degree five:
    \begin{multline*}
        -z_{13}z_{22}z_{29}z_{31}z_{33} - z_{13}z_{22}z_{31}^2z_{35} + z_{13}z_{24}z_{29}^2z_{33} + z_{13}z_{24}z_{29}z_{31}z_{35} + z_{22}^2 z_{29}z_{31}z_{33} \\+ z_{22}^2z_{31}^2z_{35} + z_{22}z_{24}z_{25}z_{29}z_{35} - z_{22}z_{24}z_{25}z_{31}z_{33} -  z_{22}z_{24}z_{29}^2z_{33} - 2z_{22}z_{24}z_{29}z_{31}z_{35} \\+ z_{22}z_{24}z_{31}^2z_{33} - z_{23}z_{24}^2z_{29}z_{35} + z_{23}z_{24}^2z_{31}z_{33} + z_{24}^2z_{29}^2z_{35} - z_{24}^2z_{29}z_{31}z_{33}=0.
    \end{multline*}
    Here the variables $z_1,\ldots,z_{36}$ denote the entries of
a symmetric ($8\times 8$)-matrix written in order starting from left to right and continuing from top to bottom. Note that if you set all non-diagonal entries in $M$ and $N$ to zero, this results in a monomial parametrisation of the classical graphical model of the 3-chain as presented in the introduction.
\end{example}

\begin{proposition}
    The variety $\Q_{\qcmi{A}{C}{B}}$ is irreducible.
\end{proposition}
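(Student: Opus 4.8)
The plan is to exhibit $\Q_{\qcmi{A}{C}{B}}$ as the image of an irreducible variety under a morphism, and then invoke the standard fact that the Zariski closure of the image of an irreducible variety under a morphism is irreducible. Concretely, the variety is defined (via Construction~\ref{rem:qfactorisation} and Example~\ref{eg:qcmiVar3path}) as the closure of the image of the polynomial parametrisation
\[
    \Phi\colon \Sym^4\times\Sym^4 \longrightarrow \Sym^8, \qquad (M,N)\longmapsto (M\otimes\Id_2)(\Id_2\otimes N),
\]
whose coordinates are the bilinear expressions displayed in Example~\ref{eg:qcmiVar3path}. Note that $\Lambda_{AB}=M\otimes\Id_2$ and $\Lambda_{BC}=\Id_2\otimes N$ automatically commute, so the commutation constraint $[\Lambda_{AB},\Lambda_{BC}]=0$ in \eqref{equ:param3path} is satisfied identically on the whole parameter space and imposes no restriction. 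The key observation is therefore that the parameter domain is simply the affine space $\Sym^4\times\Sym^4\cong\cc^{20}$, which is irreducible.

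The argument then proceeds in three short steps. First, I would record that $\Sym^4\times\Sym^4$ is irreducible, being an affine space. Second, the map $\Phi$ is a morphism of affine varieties, since each coordinate is a polynomial (in fact bilinear) function of the entries $x_i,y_j$; one can read this off directly from the explicit matrix in Example~\ref{eg:qcmiVar3path}. Third, I would apply the theorem that the closure $\overline{\Phi(\Sym^4\times\Sym^4)}$ of the image of an irreducible variety under a morphism is again irreducible. Since $\Q_{\qcmi{A}{C}{B}}$ is by definition this closure, irreducibility follows immediately.

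The only point requiring care is whether the variety we call $\Q_{\qcmi{A}{C}{B}}$ is genuinely the closure of the image of $\Phi$ as just described, rather than something cut out by additional constraints coming from the direct-sum decomposition in Theorem~\ref{thm:central}. Here I would appeal to the reformulation \eqref{equ:param3path}: the content of Construction~\ref{rem:qfactorisation} is precisely that the factorised form $\rho_{ABC}=\Lambda_{AB}\Lambda_{BC}$ with commuting symmetric factors, $\Lambda_{AB}$ trivial on $\H_C$ and $\Lambda_{BC}$ trivial on $\H_A$, is the right parametric description, and the product of such commuting factors is symmetric. Thus the image of $\Phi$ is exactly the parametrised locus, and its closure is $\Q_{\qcmi{A}{C}{B}}$. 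The main obstacle, if any, is bookkeeping: confirming that writing $\Lambda_{AB}=M\otimes\Id_2$ and $\Lambda_{BC}=\Id_2\otimes N$ captures the full generality of the parametrisation and does not lose any components; but once this identification with an affine parameter space is granted, irreducibility is immediate from the image-of-irreducible principle.
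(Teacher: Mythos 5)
Your argument has a genuine gap: the claim that $\Lambda_{AB}=M\otimes\Id_2$ and $\Lambda_{BC}=\Id_2\otimes N$ ``automatically commute'' is false. The two operators act on disjoint tensor factors only in the $A$ and $C$ slots; both act nontrivially on the shared middle factor $\H_B$. Concretely, for decomposable $M=M_A\otimes M_B$ and $N=N_B\otimes N_C$ one has $\Lambda_{AB}\Lambda_{BC}=M_A\otimes M_BN_B\otimes N_C$ while $\Lambda_{BC}\Lambda_{AB}=M_A\otimes N_BM_B\otimes N_C$, and these differ unless $[M_B,N_B]=0$. You can also see this directly in Example~\ref{eg:qcmiVar3path}: the displayed product matrix is not symmetric (its $(1,2)$ entry is $x_1y_2+x_2y_5$ while its $(2,1)$ entry is $x_1y_2+x_2y_7$), whereas the product of two commuting symmetric matrices would be symmetric; indeed, without the commutation constraint your map $\Phi$ does not even land in $\Sym^8$. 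Hence $[\Lambda_{AB},\Lambda_{BC}]=0$ is a genuine constraint, the parameter domain is not the affine space $\Sym^4\times\Sym^4$ but the commuting locus inside it, and the irreducibility of that locus (or of the relevant image) is precisely the nontrivial content of the proposition --- your argument assumes it away.

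The paper's proof addresses exactly this point: a commuting pair of symmetric matrices is simultaneously diagonalisable, and the tensor structure forces the eigenvalue multiplicity patterns $\diag(\lambda_1,\lambda_1,\dots,\lambda_4,\lambda_4)$ and $\diag(\mu_1,\dots,\mu_4,\mu_1,\dots,\mu_4)$. One therefore parametrises the commuting pairs by $(U,\lambda,\mu)\in\mathrm{U}(8)\times\rr^4\times\rr^4$ via conjugation of these diagonal matrices by a common $U$, composes with matrix multiplication, and concludes that $\Q_{\qcmi{A}{C}{B}}$ is (the closure of) the image of an irreducible variety under a morphism. Your final ``image of an irreducible source is irreducible'' step is the right tool, but it must be applied to such a parametrisation of the commuting locus, not to $\Sym^4\times\Sym^4$.
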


\begin{proof}
    Under the composition of morphisms
    \begin{align*}
        \mathrm{U}(8)\times \rr^4\times \rr^4 &\rightarrow  \Sym^8\times\Sym^8  \xrightarrow{\text{mult.}} \Sym^8\\
        (U,\lambda,\mu) & \mapsto \!\!\left(U\diag(\lambda_1,\lambda_1,\ldots,\lambda_4,\lambda_4)U^{-1}, U\diag(\lambda_1,\dots,\lambda_4,\lambda_1,\ldots,\lambda_4)U^{-1}\right) \\
       & \quad \quad (M,N) \mapsto M \cdot N,
    \end{align*}
    where ${\lambda= (\lambda_1,\ldots,\lambda_4)}$ and ${\mu= (\mu_1,\ldots,\mu_4)}$,
    $\Q_{\qcmi{A}{C}{B}}$ is the image of an irreducible variety. \end{proof}

In analogy to the classical theory of graphical models, we associate QCMI statements to separations in an undirected graph.
A \emph{separator} between two sets of nodes $A$ and $C$ in a graph $G$ is a set $B$ of nodes such that every path from a node in $A$ to a node in $C$ contains a node in $B$. For classical graphical models on undirected graphs, the Hammersley--Clifford Theorem \cite[Thm.\ 3.9]{lauritzenGraphical} (see also \cite{hc1971}) states that  a positive probability distribution satisfies the conditional independence statements associated to separations in a graph if and only if it factorises according to the graph. 
One might attempt to achieve a similar factorisation theorem for quantum graphical models, however, such description is not available for arbitrary graphs. Nevertheless, there is a ``quantum Hammersley--Clifford Theorem'' for trees.

\begin{theorem}[{\cite[Thm.\ 1]{poulin2011markov}}]
\label{thm:quHC}
    Let $G = (V,E)$ with $V=\{v_1,\dots,v_N\}$ be a tree and let $\rho$ be a positive definite quantum state on a Hilbert space $\H=\H_1\otimes\dots\otimes\H_N$ satisfying all QCMI statements imposed by $G$. Then $\rho$ can be written as the exponential of a sum of local commuting Hamiltonians, i.e.\ $\rho = \exp(H)$ with
    \[
        H=\sum_{C\in\C(G)}h_{C}, \quad [h_{C},h_{C'}]=0 \text{ for all } C,C'\in\C(G),
    \]
where $\C(G)$ is the set of cliques of $G$ and $h_C$ is only nontrivial on the clique $C$, i.e.\ $h_C$ is an endomorphism on $\H$ acting as identity on each $\H_i$ where $v_i\notin C$.
\end{theorem}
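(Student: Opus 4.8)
My plan is to induct on the number of vertices $N$ of the tree $G$, peeling off one leaf at a time and using the structure theorem (Theorem \ref{thm:central}), in the factored form of Construction \ref{rem:qfactorisation}, to split off the Hamiltonian term attached to that leaf. For $N\le 2$ there is nothing to prove, since $\log\rho$ is itself supported on the unique vertex or edge, which is the only clique. So I would assume $N\ge 3$ and that the statement holds for every tree with fewer vertices.

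For the inductive step I would pick a leaf $v$ with unique neighbour $u$ and set $R\coloneqq V\setminus\{v,u\}$. As $v$ is a leaf, $\{u\}$ separates $\{v\}$ from $R$ in $G$, so the hypothesis gives $\qcmi{v}{R}{u}=0$. By \eqref{equ:param3path} there is then a decomposition $\H_u=\bigoplus_j \H_{u_j^L}\otimes\H_{u_j^R}$ together with commuting, positive definite symmetric matrices $\Lambda_{vu},\Lambda_{uR}$ such that $\rho=\Lambda_{vu}\Lambda_{uR}$, where $\Lambda_{vu}$ acts as the identity on $\H_R$ and, within the $j$-th block, only on $\H_v\otimes\H_{u_j^L}$, while $\Lambda_{uR}$ acts as the identity on $\H_v$ and, within the $j$-th block, only on $\H_{u_j^R}\otimes\H_R$. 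Taking logarithms of the two commuting factors gives $\log\rho=\log\Lambda_{vu}+\log\Lambda_{uR}$, and I would set $h_{\{u,v\}}\coloneqq\log\Lambda_{vu}$, the term attached to the new edge clique $\{u,v\}$.

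It then remains to decompose $\log\Lambda_{uR}$ over the cliques of the pruned tree $G'\coloneqq G-v$. The marginal $\rho_{uR}=\tr_v\rho=(\tr_v\Lambda_{vu})\,\Lambda_{uR}$ differs from $\Lambda_{uR}$ only by the single-vertex, block-diagonal factor $\tr_v\Lambda_{vu}$ at $u$, which commutes with $\Lambda_{uR}$. Because partial tracing over $v$ leaves the von Neumann entropies of all subsystems contained in $\{u\}\cup R$ unchanged, and every separation of $G'$ among these vertices is also a separation of $G$, the marginal $\rho_{uR}$ inherits every QCMI statement imposed by $G'$, consistent with the semi-graphoid axioms of Proposition \ref{prop:qcmiAxioms}. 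The induction hypothesis therefore decomposes $\log\rho_{uR}$, and hence $\log\Lambda_{uR}$, into pairwise commuting local terms indexed by $\C(G')$, with the single-vertex discrepancy at $u$ absorbed into the vertex potential $h_{\{u\}}$.

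The crux, and the step I expect to be the main obstacle, is to guarantee that the new edge term $h_{\{u,v\}}=\log\Lambda_{vu}$ commutes with every inductively produced term, in particular with those attached to the junction vertex $u$. Commutation with the whole factor $\log\Lambda_{uR}$ is immediate from $[\Lambda_{vu},\Lambda_{uR}]=0$, but what I actually need is commutation with each clique term separately. This is exactly where the block decomposition $\H_u=\bigoplus_j\H_{u_j^L}\otimes\H_{u_j^R}$ does the work: $\log\Lambda_{vu}$ acts only on the left factors $\H_{u_j^L}$, whereas every term coming from $\Lambda_{uR}=\bigoplus_j\Id_{u_j^L}\otimes\rho_{u_j^R R}$ acts only on the right factors $\H_{u_j^R}$, so the two families live on complementary tensor factors within each block. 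Making this rigorous requires checking that the clique decomposition furnished by the induction respects this block structure at $u$, so that the successive decompositions align; this compatibility is the feature special to trees, and its failure for graphs containing a cycle (where separators overlap and no single such factorisation exists) is precisely why the theorem does not extend beyond the tree case.
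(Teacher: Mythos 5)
The paper does not prove this statement: it is imported verbatim from the literature (cited as Theorem~1 of \cite{poulin2011markov}), so there is no in-paper argument to compare against and your proposal has to stand on its own. Your overall strategy --- induct by peeling a leaf $v$ off its neighbour $u$, apply Theorem~\ref{thm:central} to the separation $\qcmi{v}{R}{u}=0$ to split off $\log\Lambda_{vu}$ as the edge term $h_{\{u,v\}}$, and recurse on $\tr_v\rho$ --- is the natural one and the bookkeeping you do around it is sound: the identity $\tr_v\rho=(\tr_v\Lambda_{vu})\Lambda_{uR}$ is correct, the marginal is positive definite, and your argument that $\rho_{uR}$ inherits the QCMI statements of $G'=G-v$ (separations in $G'$ remain separations in $G$ because a leaf cannot lie on a path between two other vertices, and the relevant entropies only depend on marginals of $\rho_{uR}$) is fine.

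The genuine gap is exactly the one you flag in your last paragraph, and flagging it is not the same as closing it. The induction hypothesis hands you \emph{some} pairwise-commuting decomposition $\log\rho_{uR}=\sum_{C\in\C(G')}h'_C$, but it gives you no control over how the terms $h'_C$ with $u\in C$ act on $\H_u$ relative to the particular decomposition $\H_u=\bigoplus_j\H_{u_j^L}\otimes\H_{u_j^R}$ produced by the leaf step. Your claim that ``every term coming from $\Lambda_{uR}$ acts only on the right factors'' is an assertion about the output of the induction that the induction, as you have set it up, does not deliver: the $h'_C$ are constructed from $\rho_{uR}$ via decompositions of $\H_u$ coming from the \emph{other} separations at $u$ (those internal to $G'$), and there is no a priori reason these are simultaneously refinable with the left/right splitting induced by $v$. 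Consequently $[\log\Lambda_{vu},h'_C]=0$ (and likewise $[\log(\tr_v\Lambda_{vu}),h'_C]=0$, which you need in order to absorb the discrepancy into $h_{\{u\}}$) is not established; you only get commutation with the sum $\sum_C h'_C$. This compatibility of the several Hilbert-space decompositions at a junction vertex is the actual content of the theorem --- it is where the positivity of $\rho$ and the tree structure are really used, and it requires a separate lemma (a simultaneous-refinement statement for the block decompositions induced by the different branches at $u$, proved via the structure of the commuting algebras generated by the factors). Either prove such a refinement lemma, or strengthen the induction hypothesis so that the decomposition it returns is guaranteed to respect any block structure at the root vertex induced by a further separation; as written, the inductive step does not go through.
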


This quantum Hammersley--Clifford Theorem is a generalisation of equation \eqref{equ:param3path} to trees. Along with Example \ref{eg:qcmiVar3path}, this suggests the following construction of the QCMI variety of a tree and an associated quantum graphical model.

\begin{construction}[QCMI variety of a tree]
    \label{constr:QCMIVar}
    Let $G=(V,E)$ be an undirected tree with vertices labelled $S_1,\dots,S_N$. Let $\rho_V=\rho_{S_1\dots S_N}$ be a quantum state on $\H_{1}\otimes\dots\otimes\H_{N}$. For each triple of vertices $S_i,S_j,S_k$ such that $S_j$ separates $S_i$ from $S_k$ in $G$, we impose the \emph{QCMI statement} $\qcmi{S_i}{S_k}{S_j}=0$, i.e.\ we require
    \begin{equation}
        \label{equ:qcmiParam}
        \tr_{V\setminus \{S_i,S_j,S_k\}}\rho_V = \Lambda_{S_iS_j}\Lambda_{S_jS_k} \text{ with } \left[\Lambda_{S_iS_j},\Lambda_{S_jS_k}\right] = 0
    \end{equation}
     as in (\ref{equ:param3path}). Moreover, for any two QCMI statements $\qcmi{S_i}{S_k}{S_j}=0=\qcmi{S_{i^{\prime}}}{S_{k^{\prime}}}{S_{j^{\prime}}}$ we impose the \emph{compatibility constraints}
     \begin{equation}
         \label{equ:compConstraints}
         \tr_{\T\setminus (\T\cap \T^{\prime})}\rho_\T = \tr_{\T^{\prime}\setminus (\T\cap \T^{\prime})}\rho_{\T^{\prime}} \text{ where } \T=(S_i,S_j,S_k),~\T^{\prime} = (S_{i^{\prime}},S_{j^{\prime}},S_{k^{\prime}}).
     \end{equation}
     In the qubit case, this construction gives rise to Algorithm \ref{alg:QCMI} whose output is a variety inside $\Sym^{2^N}$. We call this variety the \emph{QCMI variety} associated to $G$ and denote it by $\Q_G$. Algorithm~\ref{alg:QCMI} constructs the  QCMI variety
     by considering the conditions \eqref{equ:qcmiParam}, \eqref{equ:compConstraints}
     as polynomial constraints in the entries of an arbitrary density matrix $\rho$ and of the matrices $\Lambda_{S_iS_j},\Lambda_{S_jS_k}$, then it eliminates the $\Lambda$ parameters. The QCMI variety $\mathcal{Q}_G$ defines a \emph{quantum graphical model} $\mathcal{M}_G$ by restricting to positive semidefinite matrices with trace one inside $\mathcal{Q}_G$. Note that Algorithm \ref{alg:QCMI} and the notion of the QCMI variety generalise straightforwardly to arbitrary qudit systems.
\end{construction}

\begin{algorithm}[h!]
    \caption{Computing the QCMI variety $\Q_G$}
    \label{alg:QCMI}
    \hspace*{-21.5em} \textbf{Input:} A graph $G = (V,E)$\\
    \hspace*{-16em} \textbf{Output:} Polynomials defining $\Q_G\subseteq \Sym^{2^N}$\\
    \begin{algorithmic}[1]
    \State $N\gets \#V$
    \State $\rho_V \gets$ symmetric $(2^N\times 2^N)$-matrix consisting of variables $\rho_{11},\rho_{12},\dots,\rho_{2^N 2^N}$
    \State $\E \gets \emptyset$ initialise list of equations
    \For{every triple of vertices $\T = (S_i,S_j,S_k)$ such that $S_j$ separates $S_i$ from $S_k$ in $G$}
        \State $\Lambda_{S_iS_j} \gets (\lambda_{lm}^{\T}) \otimes \Id_2$ where $(\lambda_{lm}^{\T})$ is a symmetric $(4\times 4)$-matrix of variables
        \State $\Lambda_{S_jS_k} \gets \Id_2\otimes (\mu_{lm}^{\T})$ where $(\mu_{lm}^{\T})$ is a symmetric $(4\times 4)$-matrix of variables
        \State $\E^{\prime} \gets$ entries of $\tr_{V\setminus \T}\rho_V - \Lambda_{S_iS_j}\Lambda_{S_jS_k}$
        \State $\E^{\prime\prime} \gets$ entries of $\left[\Lambda_{S_iS_j},\Lambda_{S_jS_k}\right]$
        \State $\E\gets \E\cup\E^{\prime}\cup\E^{\prime\prime}$
    \EndFor
    \For{every pair of triples of vertices $\T = (S_i,S_j,S_k)$ and $\T^{\prime} = (S_{i^{\prime}}, S_{j^{\prime}}, S_{k^{\prime}})$}
        \State $\E^{\prime\prime\prime}\gets$ entries of $\tr_{\T\setminus (\T\cap \T^{\prime})}\rho_\T - \tr_{\T^{\prime}\setminus (\T\cap \T^{\prime})}\rho_{\T^{\prime}}$
        \State $\E\gets \E\cup \E^{\prime\prime\prime}$
    \EndFor
    \State $I\gets$ ideal generated by $\E$ in $\cc[\rho, \lambda, \mu]$
    \State $J\gets $ elimination ideal $I\cap\cc[\rho]$
    \State \Return a set of generators of $J$
    \end{algorithmic}
\end{algorithm}

\begin{remark}
    Let $G$ be the path graph on three vertices with ordered vertex labels $A,B$ and $C$ and consider the qubit case $\H_A\cong\H_B\cong\H_C\cong\cc^2$. Then $\Q_G$ is the variety $\Q_{\qcmi{A}{C}{B}}$ from Example \ref{eg:qcmiVar3path}. The computations in this example were carried out using Algorithm \ref{alg:QCMI}.
\end{remark}

\begin{remark}
    Note that as we consider trees, it is equivalent to impose QCMI statements on triples of vertices as in Construction \ref{constr:QCMIVar} or to impose a \emph{global quantum Markov property}, in the sense that one imposes the QCMI statement $\qcmi{A}{C}{B}$ for any triple of sets of vertices $A,B,C\subseteq V$ such that $B$ separates $A$ from $C$, as one can derive the latter from the former by using the Weak Union and Contraction axioms from Proposition \ref{prop:qcmiAxioms}.
\end{remark}

\begin{example}
    Consider the claw tree $G$ on four vertices with
    labels $A,B,C,D$, the set of edges $\{\{A,D\},\{B,D\},\{C,D\}\}$, and the corresponding Hilbert spaces    $\H_A\cong\H_B\cong\H_C\cong\H_D\cong\cc^2$. The Hilbert space of the full system is
    $\H=\H_A\otimes\H_B\otimes\H_C\otimes\H_D\cong \cc^{16}$. 
    Every path in $G$ with three vertices imposes a QCMI statement and any such path
    contains the node $D$. The model $\M_G$ consists of all density matrices $\rho$ that satisfy the three QCMI statements
    \[ \qcmi{A}{B}{D}=\qcmi{A}{C}{D}=\qcmi{B}{C}{D}=0.\]
    These lead to  factorisations of the marginal subsystems as
    \begin{align*}
        \tr_{A}\rho_{ABCD} &= \rho_{BCD}= \Lambda_{BD}\Lambda_{CD} \text{ with }[\Lambda_{BD},\Lambda_{CD}]=0,\\
        \tr_{B}\rho_{ABCD} &= \rho_{ACD}= \Lambda_{AD}\Lambda_{CD} \text{ with }[\Lambda_{AD},\Lambda_{CD}]=0,\\
        \tr_{C}\rho_{ABCD} &= \rho_{ABD}= \Lambda_{AD}\Lambda_{BD} \text{ with }[\Lambda_{AD},\Lambda_{BD}]=0.
    \end{align*}
    In addition, there are compatibility conditions on the marginals which lead to 
    \[          \tr_B\rho_{ABD}=\tr_C\rho_{ACD},~ \tr_A\rho_{ABD}=\tr_C\rho_{BCD},~ \tr_A\rho_{ACD}=\tr_B\rho_{BCD}.
    \] 
    It is computationally very challenging to obtain defining equations of $\Q_G$ as Algorithm~\ref{alg:QCMI} would involve eliminating $60$ variables in a polynomial ring in $196$ variables, which is infeasible with current computational resources.
\end{example}

\begin{question}
    Is the variety $\Q_{G}$ irreducible for any tree $G$?
\end{question}

It would be desirable to find a parametrisation of $\Q_G$. Quantum information theory provides a map that recovers a unique quantum state compatible with given two-body marginals on a tree, called the \emph{Petz recovery map} \cite{hayden2004structure, petz1986sufficient}. However, our algebraic version of this map does not parametrise the QCMI variety $\Q_G$ due to the fact that we are working with complex symmetric matrices instead of Hermitian matrices. The Petz recovery map therefore gives rise to a different variety, which we introduce in the next subsection.

\subsection{Petz varieties} 
The Quantum Marginal Problem asks about how to reconstruct a quantum state of a composite system from the states of its subsystems. In the case of the 3-chain graph with ordered vertices $A,B$ and $C$, one can ask for a quantum state $\rho_{ABC}$ with given two-body marginals $\rho_{AB}$ and $\rho_{BC}$ and satisfying the quantum Markov condition $\qcmi{A}{C}{B}=0$. The answer to this particular problem is given by the Petz recovery map. This map is of algebraic nature and gives rise to an algebraic variety, the \emph{Petz variety}, which we study in this subsection. 

We start by introducing the Petz recovery map for the 3-chain graph with the associated Hilbert space $\H=\H_A\otimes\H_B\otimes\H_C$. Let $\C$ be the set of pairs of compatible invertible density operators on $\H_A\otimes\H_B$ and $\H_B\otimes\H_C$, respectively, i.e.\ an element in $\C$ is of the form $(\rho_{AB},\rho_{BC})$, where $\rho_{AB}$ and $\rho_{BC}$ are invertible density operators satisfying the compatibility condition $\tr_A\rho_{AB} = \tr_C\rho_{BC}$. The Petz recovery map $\R_G$ for the 3-chain graph $G$ is
\begin{equation*}
    \small \R_G\colon \C \rightarrow \D(\H_A\otimes\H_B\otimes\H_C)
\end{equation*}
\begin{equation}
    \label{equ:PetzMap}
    \small \R_G(\rho_{AB}, \rho_{BC}) = (\rho_{AB}^{1/2}\otimes \Id_C)(\Id_A\otimes\rho_{B}^{-1/2}\otimes\Id_C)(\Id_A\otimes\rho_{BC})(\Id_A\otimes\rho_{B}^{-1/2}\otimes\Id_C)(\rho_{AB}^{1/2}\otimes\Id_C),
\end{equation}
where $\Id_A,\Id_B$ and $\Id_C$ are the identity operators on $\H_A,\H_B$ and $\H_C$, respectively. The recovered state is compatible with the marginals and satisfies $I(A\colon C\mid B)=0$. Moreover, it is the unique maximiser of the von Neumann entropy among all states on $\H$ \cite[Thm.\ 1]{di2020recoverability}. This, in particular, shows that the map $\R'_G$ defined by
\begin{equation*}
    \small \R'_G(\rho_{AB}, \rho_{BC}) \!=\!\! (\Id_A\otimes\rho_{BC}^{1/2})(\Id_A\otimes\rho_{B}^{\minus 1/2}\otimes\Id_C)(\rho_{AB}\otimes\Id_C)(\Id_A\otimes\rho_{B}^{\minus1/2}\otimes\Id_C)(\Id_A\otimes\rho_{BC}^{1/2})
\end{equation*}
recovers the same state \cite[Rem.\ 2]{di2020recoverability}.\par 
The Petz recovery map \eqref{equ:PetzMap} gives rise to a rational map $R_G$. From this point forward we restrict to the qubit case $\H_A\cong\H_B\cong\H_C\cong \cc^2$; the general case is a straightforward generalisation. Let $\rho_{AB}^{1/2} = X = (x_{ij})$ be a $(4\times 4)$-symmetric matrix of variables $x_{11},x_{12},\dots,x_{44}$. In the same way, let $\rho_{BC}^{1/2} = Y = (y_{ij})$. Finally, let $\rho_B^{1/2} = Z = (z_{ij})$ be a $(2\times 2)$-symmetric matrix of variables. To reflect the required marginal compatibilities between $\rho_{AB}, \rho_{BC}$ and $\rho_{B}$, we impose the conditions $\tr_C(Y^2) = \tr_A(X^2) = Z^{2}$. These conditions cut out a variety $V$ in $\Sym^4_{\rr}\times \Sym^4_{\rr} \times \Sym^2_{\rr}$. In analogy to \eqref{equ:PetzMap}, the map $R_G: V \dashrightarrow\Sym^8_{\rr}$ sends a point $(x,y,z) \in V$ to 
\begin{equation}
    \label{equ:PetzRationalMap}
    (x\otimes \Id_C)(\Id_A\otimes z^{-1} \otimes \Id_C)(\Id_A\otimes y)(\Id_A\otimes z^{-1} \otimes \Id_C)(x\otimes \Id_C).
\end{equation}
We call the Zariski closure of $R_G(V)$ inside $\Sym^8$, the space of \emph{complex} symmetric $(8\times 8)$-matrices, the \emph{Petz variety} of $G$ and denote it~$\mathcal{P}_G$. 

\begin{remark}
    The expression \eqref{equ:PetzRationalMap} for $R_G$ gives a concrete polynomial parametrisation of $\P_G$.
    The polynomials appearing in $R_G$ have degree five and have a minimum of $20$ and maximum of $32$ terms. The number of parameter variables is 23 while the variety $V$ has dimension 17. Algorithm \ref{alg:Petz} provides a symbolic routine to compute the ideal of the Petz variety for arbitrary trees. When restricting to the case where $x,y$ and $z$ are diagonal, \eqref{equ:PetzRationalMap} gives yet another parametrisation of the classical graphical model of the 3-chain from the introduction.
\end{remark}

\begin{proposition}
    \label{prop:PetzIrred3Chain}
    Let $G$ be the 3-chain graph. The Petz variety $\P_G$ is irreducible.
\end{proposition}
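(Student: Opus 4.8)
The approach mirrors the proof that $\Q_{\qcmi{A}{C}{B}}$ is irreducible: I will present $\P_G$ as the Zariski closure of the image of an irreducible variety under a rational map. By definition $\P_G = \overline{R_G(V)}$, with $R_G$ the rational map \eqref{equ:PetzRationalMap}, which is a morphism on the dense open subset of $V$ where $Z$ is invertible. Since the closure of the image of an irreducible variety under a rational map is irreducible, the whole statement reduces to showing that the parameter variety $V$ is irreducible. It is convenient to prove this for the complexification of $V$, cut out in $\Sym^4 \times \Sym^4 \times \Sym^2 \cong \cc^{23}$ by the entries of $\tr_A(X^2) - Z^2$ and $\tr_C(Y^2) - Z^2$. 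Once $V$ is shown irreducible, the positive definite square roots (discussed below) exhibit a smooth $17$-dimensional real locus, so the real points are Zariski dense and $\P_G$ is the closure of the image of the \emph{irreducible} $V$, as required.

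The key structural observation is that each of the six defining polynomials is homogeneous of degree two in the combined variables $(X, Y, Z)$: the entries of $\tr_A(X^2)$ and $\tr_C(Y^2)$ are quadratic in $X$ and $Y$, and the entries of $Z^2$ are quadratic in $Z$. Thus $V$ is a \emph{cone}, defined by six quadrics in $\cc^{23}$. Using the known value $\dim V = 17 = 23 - 6$, these six quadrics form a regular sequence, so $V$ is a complete intersection; in particular $V$ is Cohen–Macaulay and hence satisfies Serre's condition $S_2$. Being a cone, $V$ is connected, since every point is joined to the apex by a line through the origin. As the irreducible components of a normal variety are disjoint, a connected normal variety is automatically irreducible. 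By Serre's criterion, normality of the complete intersection $V$ is equivalent to the single condition $R_1$: that the singular locus of $V$ has codimension at least two.

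Verifying $R_1$ is the heart of the argument and the step I expect to be the main obstacle. The singular locus of the complete intersection $V$ is the set where the $6 \times 23$ Jacobian of the defining quadrics has rank less than six; since differentiating $\tr_A(X^2)$, $\tr_C(Y^2)$ and $Z^2$ produces blocks that are linear in the entries of $X$, $Y$ and $Z$, the task is to show that this rank-deficiency locus meets $V$ in dimension at most fifteen. This is a concrete (if lengthy) computation with the explicit Jacobian; it can be streamlined using the congruence action of $\mathrm{O}(2, \cc)$ on the $B$-factor, which preserves $V$ and acts on $Z$ by congruence, allowing one to assume $Z$ is diagonal. Granting $R_1$, the variety $V$ is a normal, connected complete intersection and therefore irreducible, whence $\P_G$ is irreducible.

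For intuition and to supply the real locus used above, it is worth recording the fibration $\pi \colon V \to \Sym^2$, $(X, Y, Z) \mapsto Z$, whose generic fibre decouples as a product $\{X : \tr_A(X^2) = Z^2\} \times \{Y : \tr_C(Y^2) = Z^2\}$ because the two constraints separate once $Z$ is fixed. For $W \succ 0$, the positive definite solutions of $\tr_A(X^2) = W$ are the single-valued, continuous positive semidefinite square roots of the convex, hence connected, affine section $\{\rho_{AB} \succ 0 : \tr_A(\rho_{AB}) = W\}$ of the positive definite cone; this furnishes the connected, smooth, $17$-dimensional real locus of $V$ whose Zariski density was invoked in the first paragraph. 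I would rely on the $R_1$ argument for full rigour, since this real picture by itself detects only the component containing the positive definite locus.
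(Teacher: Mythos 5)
Your argument reduces everything to the irreducibility of the parameter variety $V$ cut out by the quadrics $\tr_A(X^2)-Z^2$ and $\tr_C(Y^2)-Z^2$, and then reduces that to Serre's condition $R_1$ for this complete intersection --- but you never verify $R_1$; you explicitly defer it as ``a concrete (if lengthy) computation''. This is not a cosmetic omission: it is the entire content of the proof. A connected complete intersection of quadrics that is a cone can perfectly well be reducible (e.g.\ a rank-two quadric $xy=0$ is a connected cone and a complete intersection, and it is precisely the failure of $R_1$ that permits this), so Cohen--Macaulayness plus connectedness buys you nothing until the codimension of the singular locus is actually controlled. Since the map $(X,Y,Z)\mapsto(X^2,Y^2,Z^2)$ is generically finite of degree $>1$ onto its image, the fibres of $V$ over the marginals genuinely consist of several square-root branches, and whether monodromy connects them is exactly the question you have left open. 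As written, the proposal is a reduction to an unproved statement, not a proof.

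The gap is avoidable, and the paper's route shows how: instead of working in square-root coordinates, parametrise by the marginals $(\rho_{AB},\rho_{BC})$ themselves. The compatibility condition $\tr_A\rho_{AB}=\tr_C\rho_{BC}$ is \emph{linear}, so the domain is a dense open subset (invertible matrices) of a linear space, hence irreducible, and $\P_G$ is the closure of its image under the continuous map $\R_G$ (square roots and inverses of symmetric matrices are again symmetric, so the image has the same closure as $R_G(V)$). You came within reach of this in your final paragraph --- the observation that the real picture ``detects only the component containing the positive definite locus'' is in fact all that is needed once the domain is chosen to be the marginals rather than their square roots --- but you discarded it in favour of the normality argument. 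If you want to keep your approach, you must either carry out the Jacobian rank computation for $R_1$ in full, or prove irreducibility of $V$ directly (e.g.\ by a monodromy argument on the square-root branches); either is substantially more work than the linear reparametrisation.
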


\begin{proof}
    Consider the subset $S \subset \Sym^8_{\rr}\times\Sym^8_{\rr}$ consisting of pairs of invertible matrices whose partial traces agree. The condition that their partial traces agree defines a linear subspace of $\Sym^8_{\rr}\times\Sym^8_{\rr}$. Linear spaces are irreducible and taking out the locus of positive codimension where the matrices become singular preserves irreducibility. Therefore, $S$ is irreducible. Note that $\R_G$ can be considered as a map on $S$ and the Zariski closure of its image coincides with $\P_G$ since square roots and inverses of symmetric matrices are again symmetric. Therefore, since $\R_G$ is continuous, $\P_G$ is irreducible.
\end{proof}

The Petz map can be generalised to arbitrary trees by iteratively applying the procedure for 3-chains of a tree $G$ \cite{di2021complexity,di2020recoverability}. This is done by taking two leaves $v_1$ and $v_2$ of a tree $G$, and replacing $G\setminus\{v_1,v_2\}$ by a single vertex representing a joint state on this subgraph. The joint state on $G$ is then expressed in terms of states on $G\setminus\{v_1\}$ and $G\setminus\{v_2\}$ via \eqref{equ:PetzMap}; by applying this procedure iteratively to $G\setminus\{v_1\}$ and $G\setminus\{v_2\}$, we reduce to the level of two-body marginals. This process leads to a map as in \eqref{equ:PetzMap} involving only one- and two-body marginals; again, we denote the resulting map by $\R_G$. Note that the expression for $\R_G$ depends on the choice of $v_1$ and $v_2$ in each iteration.\par
We now generalise the construction of the Petz variety to arbitrary trees.

\begin{construction}[Petz variety]
    Let $G$ be a tree with $N$ vertices and let us fix an expression for $\R_G$ as obtained in the previous paragraph. Let $\varrho_1$ and $\varrho_2$ be the sets of one- and two-body marginals occurring in $\R_G$. Moreover, let $V$ be the variety inside $S=(\Sym^2_{\rr})^{\#\varrho_1} \times (\Sym^4_{\rr})^{\#\varrho_2}$ consisting of tuples of symmetric matrices satisfying compatibility constraints according to $G$. In analogy to \eqref{equ:PetzMap}, $\R_G$ gives rise to a rational map $R_G\colon V\dashrightarrow \Sym^{2^N}_{\rr}$. The \emph{Petz variety} $\P_G$ of $G$ is defined as $\overline{R_G(V)}\subseteq \Sym^{2^N}$. 
\end{construction}

Algorithm \ref{alg:Petz} makes this construction explicit and computes the ideal of $\P_G$.

\begin{algorithm}\label{alg:Petz}
    \caption{Computing the Petz variety $\P_G$}
    \hspace*{-18.5em} \textbf{Input:} A graph $G$ with $N$ vertices\\
    \hspace*{-10em} \textbf{Output:} An ideal defining the Petz variety $\P_G\subseteq \Sym^{2^N}$
    \begin{algorithmic}[1]
    \State $R_G\gets$ expression for $\R_G$ in terms of one- and two-body marginals
    \State $\varrho_1, \varrho_2 \gets$ sets of one- and two-body marginals, respectively, involved in $R_G$
    \For{all $\rho_{v}\in\varrho_1$}
        \State $Z_{v} \gets (z_{ij}^{v})$ symmetric $(2\times 2)$-matrix of variables
    \EndFor
    \For{all $\rho_{v_1v_2}\in\varrho_2$}
        \State $X_{v_1v_2} \gets (x_{ij}^{v_1v_2})$ symmetric $(4\times 4)$-matrix of variables
    \EndFor
    \State $S \gets (\Sym^2_{\rr})^{\#\varrho_1}_{\{Z_{v_i}\}}\times(\Sym^4_{\rr})^{\#\varrho_2}_{\{X_{v_iv_j}\}}$
    \State $\E\gets\emptyset$
    \For{all pairs $(\rho_{v_1v_2},\rho_{w_1w_2})\in\varrho_2^2$ such that $v_2=w_1$}
        \State $\E\gets \E\cup \{\text{entries of } \tr_{v_1}(X_{v_1v_2}^2) \minus \tr_{w_2}(X_{w_1w_2}^2)\} \cup \{\text{entries of } \tr_{v_1}(X_{v_1v_2}^2) \minus Z_{v_2}^2\}$
    \EndFor
    \State $V\gets$ variety defined by $\E$ inside $S$ \label{line:variety}
    \State $R_G\gets R_G$ with every $\rho_{v_1v_2}$ replaced by $X_{v_1v_2}$ and every $\rho_v$ replaced by $Z_v$
    \State \Return $\ker(R_G\colon \cc[\Sym^{2^N}]\rightarrow \cc[V])$
    \end{algorithmic}
\end{algorithm}

\begin{proposition}
    The Petz variety $\P_G$ does not depend on the choice of expression for the recovery map $\R_G$.
\end{proposition}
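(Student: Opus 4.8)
The plan is to show that any two expressions for the recovery map, say $R_G$ and $R_G'$ coming from two different sequences of leaf removals, have the \emph{same} closure of image, so that $\P_G$ is well defined. First I would observe that the two maps share a common domain $V$: the one- and two-body marginals occurring in any such expression are indexed by the separating vertices and the edges of $G$, and these are intrinsic to the tree, not to the order in which leaves are deleted. Thus both $R_G$ and $R_G'$ are rational maps $V \dashrightarrow \Sym^{2^N}$ with the same source, and $\P_G = \overline{R_G(V)}$, $\P_G' = \overline{R_G'(V)}$ are the two closures we must compare.

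The key input is the \emph{uniqueness} of the reconstructed quantum state. Let $U \subseteq V$ be the semialgebraic locus of genuine data: tuples of real, positive definite, pairwise compatible marginals, each equipped with its unique positive definite square root. On $U$ the map $R_G$ agrees with the analytic Petz recovery map $\R_G$, whose output is a bona fide density matrix compatible with all prescribed marginals and satisfying every QCMI statement imposed by $G$. By the reconstruction theorem for trees (the uniqueness underlying Theorem~\ref{thm:quHC}; see \cite{di2020recoverability}), such a state is unique and depends only on the input marginals, not on the order of the iterative procedure. Hence $R_G(m) = R_G'(m)$ for every $m \in U$, so $R_G$ and $R_G'$ coincide pointwise on $U$, and in particular $R_G(U) = R_G'(U)$ as subsets of $\Sym^{2^N}$.

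It remains to propagate this agreement from $U$ to all of $V$. The locus $U$ is nonempty (it contains the marginal data of any positive definite quantum Markov state on $G$) and is open in the Euclidean topology of the real points of $V$, hence full-dimensional; provided $V$ is irreducible, $U$ is Zariski dense in it. Since $R_G$ is a morphism on the locus where it is defined (on genuine invertible data the relevant matrix is invertible), continuity of $R_G$ for the Zariski topology gives $R_G(V) = R_G(\overline{U}) \subseteq \overline{R_G(U)}$, and therefore $\overline{R_G(V)} = \overline{R_G(U)}$; the same holds for $R_G'$. Combining with $R_G(U) = R_G'(U)$ yields $\P_G = \overline{R_G(U)} = \overline{R_G'(U)} = \P_G'$, as desired. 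The one step requiring genuine care is the irreducibility of $V$ (equivalently, Zariski-density of $U$): for the $3$-chain this is exactly Proposition~\ref{prop:PetzIrred3Chain}, where the compatibility constraints cut out a linear space and removing the positive-codimension singular locus preserves irreducibility; for a general tree I would run the same argument on the tuple of marginals, whose compatibility conditions remain linear, and transport the conclusion to $V$ via the positive definite square-root parametrisation. I expect this irreducibility to be the main obstacle, while everything else reduces to the already-established uniqueness of the Petz reconstruction on trees.
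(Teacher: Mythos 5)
Your argument is correct and follows essentially the same route as the paper's: both proofs rest on the uniqueness of the analytic Petz reconstruction on genuine (real, positive definite, compatible, invertible) marginal data, so that any two expressions for $\R_G$ agree there, and then propagate the equality $\overline{R_G(V)}=\overline{R'_G(V)}$ via Zariski density of that locus in $V$. You are somewhat more explicit than the paper about the two steps it leaves implicit --- that the domain $V$ is intrinsic to the tree and that the density argument needs irreducibility of $V$ --- but this is a refinement of the same proof, not a different one.
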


\begin{proof}
    By the same argument as for the 3-chain graph above, the map $\R_G$ does not depend on the chosen expression. Let $\C$ be the domain of $\R_G$; each element of $\C$ is a tuple consisting of $\#E$-many compatible invertible two-body marginals, where $E$ is the set of edges of $G$. Consider the set $(\Sym^4_{\rr})^{\#E}$ of $\#E$-tuples of real symmetric $(4\times 4)$-matrices; it is Zariski dense in the set of complex symmetric matrices. We have $\R_G(\C \cap (\Sym^4_{\rr})^{\#E}) = R_G(V \cap U)$ where $U\subseteq \cc^{2^N\times 2^N}$ is the Zariski dense open set of invertible matrices. The set 
    $R_G(V \cap U)$ is Zariski dense in the Petz variety $\P_G$. Let $\R'_G$ be another expression for the recovery map, then $\R_G(\C \cap (\Sym^4_{\rr})^{\#E})= R'_G(V\cap U)$ and denote by $\P'_G$ the variety defined by 
    $R'_{G}$. It follows that $R'_G(V\cap U)=R_G(V \cap U)$ so $\P_G$ and $\P'_G$ agree on a dense open set, thus $\P_G=\overline{R_G(V)} = \overline{R'_G(V)}=\P'_G$.
\end{proof}

\begin{proposition}
    For any tree $G$, the Petz variety $\P_G$ is irreducible.
\end{proposition}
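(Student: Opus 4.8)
The plan is to follow the strategy used for the $3$-chain in Proposition~\ref{prop:PetzIrred3Chain}, namely to exhibit $\P_G$ as the Zariski closure of the image of an irreducible set under the recovery map $\R_G$. First I would identify the correct domain. Let $\C$ denote the set of tuples of invertible two-body marginals appearing in $\R_G$ that satisfy the compatibility constraints of $G$. The crucial observation is that these constraints only require certain partial traces to agree, i.e.\ $\tr_{v_1}\rho_{v_1v_2} = \tr_{w_2}\rho_{w_1w_2}$ whenever $v_2 = w_1$; since the partial trace is a linear operation, $\C$ is the intersection of a linear subspace of $(\Sym^4_\rr)^{\#E}$ with the open dense locus of invertible matrices. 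A linear space is irreducible, and removing a closed subset of positive codimension preserves irreducibility, so $\C$ is irreducible, exactly as in the $3$-chain case.

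Next I would record the analytic properties of $\R_G$. By construction $\R_G$ is an iterated composition of the elementary recovery maps \eqref{equ:PetzMap}, each of which is built from matrix inversions and positive-definite square roots. On the locus of positive-definite symmetric matrices all of these operations preserve symmetry and are real-analytic, and the elementary Petz map sends positive-definite compatible marginals to a positive-definite joint state: writing \eqref{equ:PetzMap} as $P\,(\Id_A\otimes\rho_{BC})\,P^{T}$ with $P$ invertible exhibits the output as congruent to a positive-definite matrix. Hence the iteration never leaves the positive-definite locus, and $\R_G$ restricts to a real-analytic map on the set $\C^{+}\subseteq\C$ of positive-definite compatible tuples. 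Moreover $\C^{+}$ is convex — it is the intersection of the convex cone of positive-definite tuples with the linear compatibility subspace — and therefore connected; it is also Zariski dense in $\C$.

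The main step, and the point that must be handled with care, is the passage from ``$\C^{+}$ connected'' to ``$\overline{\R_G(\C^{+})}$ irreducible''; this is the rigorous content behind the phrase ``continuous image of an irreducible set'' used for the $3$-chain, and here $\R_G$ is not a morphism because of the square roots. I would argue as follows: if the Zariski closure $\P_G=\overline{\R_G(\C^{+})}$ decomposed as $Z_1\cup Z_2$ with both $Z_i$ proper, then $\C^{+}=\R_G^{-1}(Z_1)\cup\R_G^{-1}(Z_2)$, and each $\R_G^{-1}(Z_i)$ is the common zero locus of the real-analytic functions $x\mapsto p(\R_G(x))$ for $p\in I(Z_i)$. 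Since $\R_G(\C^{+})\not\subseteq Z_i$, some such function is not identically zero on the connected set $\C^{+}$, so each preimage is a proper real-analytic subset of $\C^{+}$, hence closed and nowhere dense. A nonempty connected open subset of a real vector space cannot be written as the union of two nowhere-dense closed sets, a contradiction. Therefore $\P_G$ is irreducible.

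Finally I would note that $\R_G(\C^{+})$ is indeed Zariski dense in $\P_G$: because square roots and inverses of symmetric matrices are again symmetric, $\R_G$ viewed on marginals and $R_G$ viewed on $V$ have the same image closure, and the real positive-definite points are Zariski dense, so $\overline{\R_G(\C^{+})}=\overline{R_G(V)}=\P_G$, independently of the chosen expression for $\R_G$ by the previous proposition. The main obstacle is thus not the linear-algebra bookkeeping but the analytic irreducibility argument above, together with verifying that the iterated Petz reconstruction remains in the positive-definite locus so that $\R_G$ is genuinely real-analytic on a connected domain.
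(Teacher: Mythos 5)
Your proof follows essentially the same route as the paper's: the paper disposes of the general tree case in a single line by noting, exactly as for the $3$-chain, that $\P_G$ is the Zariski closure of the image of an (open subset of an) irreducible linear space of compatible marginals under the continuous map $\R_G$. The one place you go beyond the paper is your third paragraph: the paper's appeal to bare continuity is not literally sufficient (a merely continuous image of a connected set can have reducible Zariski closure), and your real-analyticity plus Baire-category argument for why no decomposition $Z_1\cup Z_2$ can exist is the correct way to make that step rigorous --- a welcome strengthening of the same argument rather than a different approach.
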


\begin{proof}
    The proof is analogous to that of Proposition \ref{prop:PetzIrred3Chain}: $\P_G$ can be represented as the Zariski closure of the image of a linear space under a continuous map.
\end{proof}
\begin{remark}
    Computing the defining equations of the Petz variety is very challenging. Even in the case of the 3-chain graph, Algorithm \ref{alg:Petz} does not terminate as it involves symbolic computations in a polynomial ring with 59 variables. Applying numerical implicitisation techniques is also not straightforward for the same reason.
\end{remark}
\begin{remark}
    If we considered Hermitian matrices, the set of states recovered by the Petz map would coincide with the set of states satisfying SSA with equality \cite{hayden2004structure}. However, since the ambient space of our varieties is that of complex symmetric matrices, the QCMI variety~$\Q_G$ and the Petz variety $\P_G$ are not the same.
\end{remark}


\section{Quantum Graphical Models from Gibbs Manifolds}
\label{sec:QGMandGibbs}

 In this section, we consider a new class of quantum graphical models, which arise as Gibbs manifolds of families of Hamiltonians. These serve as examples of \emph{quantum exponential families} \cite{ZhouQEF}. Gibbs manifolds and Gibbs varieties have been introduced in \cite{pavlov2022gibbs} as tools to study convex optimisation and, in particular, quantum optimal transport from an algebro-geometric perspective; we recall the definition.

\begin{definition}
    Let $\L$ be a linear space of real symmetric matrices (LSSM) of size $n\times n$. The \emph{Gibbs manifold} $\gm(\L)$ of $\L$ is $\exp(\L)$ where $\exp$ denotes the matrix exponential. The \emph{Gibbs variety} $\gv(\L)$ of $\L$ is the Zariski closure of $\gm(\L)\subseteq \Sym^n$.
\end{definition}
In physics, the Gibbs manifold parametrises thermal states of a family of Hamiltonians. Those states are crucial to quantum many-body systems theory and computation \cite{alhambra2022}. The Gibbs manifold is not in general 
algebraic, while the Gibbs variety is. In special cases, their dimensions coincide. This happens, for instance, when $\mathcal{L}$ consists of diagonal matrices with rational entries. For such $\L$ the resulting Gibbs variety is toric \cite[Theorem 2.7]{pavlov2022gibbs} and recovers the classical notion of exponential families \cite{efron2022}. Moreover, the Gibbs manifold in this case is the intersection of the Gibbs variety with the cone of positive definite matrices. One advantage of considering the Gibbs variety instead of the Gibbs manifold
is that it becomes possible to treat related problems in quantum information by using symbolic and numerical methods from algebraic geometry. 

\subsection{Gibbs varieties of linear systems of Hamiltonians}
The quantum Ham-mersley--Clifford Theorem (Theorem \ref{thm:quHC}) suggests to consider exponentials of \emph{local} Hamiltonians, i.e.\ those that act non-trivially only on a small subsystem. However, we do not
consider the class of local \emph{commuting} Hamiltonians as they neither form an LSSM nor a unirational variety (see \S \ref{subsec:unirational}).\par 

To a simple, undirected graph $G=(V,E)$ we associate an LSSM of Hamiltonians as follows. For each clique $C$ in $G$, let 
$\L_{C} $ be the LSSM given by all Hamiltonians \emph{supported} on $C$, i.e.\ those that act nontrivially only on the tensor factors $\H_i$ such that $v_i\in C$ and act as identity on all other subsystems. More precisely, $\L_C=\otimes_i L_{C}^i$ where $L_{C}^i=\Sym^{d_i}$
for $v_i\in C$ and $L_{C}^i=\Id_{d_i}$ otherwise. The family of Hamiltonians associated to $G$ is then $\L_G=\sum_{C\in\C(G)}\L_C$ where the sum runs over all cliques of $G$ \cite[Equ.\ 17]{weis2023}. The quantum graphical model is $\gm(\L_G)$ intersected with the space of trace one matrices. The Gibbs variety $\gv(\L_G)$ gives an algebraic description of this model. 

\begin{example}
\label{eg:3chainLSSMGV}
    Consider the 3-chain graph $G$ and assume we are in the qubit case, i.e.\ $\H_A\cong\H_B\cong\H_C\cong\cc^2$. Then we have $\L_G = \Sym^2\otimes \Sym^2\otimes \Id_2 + \Id_2\otimes \Sym^2 \otimes \Sym^2$. Using numerical algebraic geometry techniques, we verify that no linear or quadratic equations vanish on $\gv(\L_G)$. The higher degree equations are not amenable to our computational techniques. 
    The dimension of $\gm(\L_G)$ is 15 while the dimension of $\gv(\L_G)$ is at most 22.
\end{example}

Gibbs varieties have a number of nice theoretical properties, e.g.\ under mild assumptions they are unirational
\cite[Thm.\ 3.6]{pavlov2022gibbs}. However, as seen in Example \ref{eg:3chainLSSMGV}, their defining ideals are
often difficult to compute. In view of this, one might hope to simplify the defining equations of the Gibbs variety by restricting the family of Hamiltonians to a subset inside $\L_G$. This approach is pursued in the next subsection.

\subsection{Gibbs varieties of unirational varieties}
\label{subsec:unirational}
A natural subset to consider inside $\L_G$ is $X_G \coloneqq \sum_{C\in\C(G)} X_C$, where $X_C$ is the set of decomposable tensors supported on $C$. Note that $X_G$ is not a linear space. However, it is still a unirational variety. This motivates the following extension of the notion of Gibbs varieties.

\begin{definition}
    Let $X$ be a unirational variety of symmetric matrices of size $n\times n$. The \emph{Gibbs variety} $\gv(X)$ of $X$ is the Zariski closure of $\exp(X)\subseteq \Sym^n$.
\end{definition}

A number of concepts related to Gibbs varieties of linear spaces generalise to the case of unirational varieties of symmetric matrices. If $X$ is unirational and has dimension $d$, then it can be parametrised by rational functions in $d$ variables $y_1,\ldots,y_d$. Therefore, one can think of $X$ as a single matrix with entries in $\mathbb{C}(y_1,\ldots,y_d)$. The eigenvalues of this matrix are elements of $\overline{\mathbb{C}(y_1,\ldots,y_d)}$ and will be referred to as the eigenvalues of $X$.
Let $A \in \mathbb{S}^n$, then the \emph{$X$-centraliser} of $A$ is the set $\mathcal{Z}_X(A) = \{B \in X \mid AB-BA=0\}$. We collect properties of Gibbs varieties of unirational varieties in the following statement.

\begin{proposition}

Let $X$ be a unirational variety of $(n\times n)$-symmetric matrices of dimension $d$. Let $m$ be the dimension of the $\mathbb{Q}$-linear space spanned by the eigenvalues of $X$ and $k$ be the dimension of the $X$-centraliser of a generic element of $X$. Then  $\dim(\mathrm{GV}(X)) =  m+d-k$. In particular, $\dim(\gv(X)) \leq n + d$. Moreover, if $X$ has distinct eigenvalues, then $\gv(X)$ is irreducible and unirational. 
\end{proposition}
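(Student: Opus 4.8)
The plan is to reduce the statement about the unirational variety $X$ to the already-understood case of Gibbs varieties of linear spaces, as developed in \cite{pavlov2022gibbs}. Since $X$ is unirational of dimension $d$, fix a dominant rational parametrisation $\phi\colon \cc^d \dashrightarrow X$ and regard $X$ as a single symmetric matrix $M(y)$ with entries in the function field $\kk\coloneqq\cc(y_1,\dots,y_d)$. The matrix exponential $\exp(M(y))$ is computed by diagonalising $M(y)$ over the splitting field of its characteristic polynomial: writing $M(y)=U(y)\diag(\ell_1,\dots,\ell_n)U(y)^{-1}$ with eigenvalues $\ell_i\in\overline{\kk}$, we have $\exp(M(y)) = U(y)\diag(e^{\ell_1},\dots,e^{\ell_n})U(y)^{-1}$. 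The key observation is that the Zariski closure of this image depends on the $\ell_i$ only through the exponentials $e^{\ell_i}$, and algebraic relations among these exponentials are governed precisely by $\qq$-linear relations among the eigenvalues themselves (two exponentials $e^{\ell_i},e^{\ell_j}$ are algebraically dependent over the image of the $y_i$ iff $\ell_i-\ell_j$ lies in an appropriate $\qq$-span). This is exactly the mechanism used in \cite{pavlov2022gibbs} to compute Gibbs varieties of LSSMs, and it explains the appearance of $m$, the dimension of the $\qq$-span of the eigenvalues.

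First I would establish the dimension formula $\dim(\gv(X)) = m+d-k$ by a transcendence-degree count. The coordinate ring of $\gv(X)$ is generated by the entries of $U(y)\diag(e^{\ell_1},\dots,e^{\ell_n})U(y)^{-1}$, so $\dim(\gv(X))$ equals the transcendence degree over $\cc$ of the field generated by these entries. There are three contributions: the $d$ parameters $y_1,\dots,y_d$ themselves, the $m$ independent exponential transcendentals arising from the $\qq$-linearly independent eigenvalues (here one invokes a Lindemann--Weierstrass-type statement, or the algebraic-independence argument from \cite{pavlov2022gibbs}, to ensure these contribute exactly $m$ new transcendentals), and a correction $-k$ accounting for the fibre dimension: the conjugating matrix $U(y)$ is only determined up to the stabiliser of $M(y)$, and the generic centraliser $\mathcal{Z}_X$ of dimension $k$ measures exactly the directions along which varying the diagonalisation does not change $\exp$, i.e.\ the dimension of the fibres of the parametrisation of $\gv(X)$. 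Combining these gives $m+d-k$. The bound $\dim(\gv(X))\le n+d$ then follows because $m\le n$ (there are at most $n$ eigenvalues) and $k\ge 0$.

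For the final claims, suppose $X$ has distinct eigenvalues. Irreducibility is immediate: $\gv(X)$ is by definition the Zariski closure of $\exp(X)$, which is the continuous (indeed real-analytic) image of the irreducible variety $X$ (unirational varieties are irreducible, being dominated by affine space), and the closure of the image of an irreducible set under a continuous map is irreducible. For unirationality, the distinct-eigenvalue hypothesis is what makes the diagonalisation behave well: the eigenvalues $\ell_i$ and eigenprojections of $M(y)$ are algebraic over $\kk$, and the distinctness guarantees the eigenprojections are rational in the eigenvalues, so that after adjoining the splitting field one obtains a rational parametrisation of $\gv(X)$ in the variables $y_j$ together with the new transcendentals $t_i = e^{\ell_i}$. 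Concretely, one builds a dominant rational map from an affine space onto $\gv(X)$ using $\phi$ to parametrise $X$ and the spectral decomposition to express the matrix entries of $\exp$, exhibiting $\gv(X)$ as unirational.

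The hard part will be making the transcendence-degree bookkeeping rigorous, in particular justifying that the $m$ $\qq$-independent eigenvalues contribute exactly $m$ independent exponential transcendentals over the field $\cc(y_1,\dots,y_d)$ and no more. This requires the algebraic independence input (a Lindemann--Weierstrass-style argument) precisely as in the linear case of \cite{pavlov2022gibbs}, and one must check that passing from a genuine linear space to a unirational variety $X$ does not disturb this count — the eigenvalues are now algebraic functions of the $y_j$ rather than linear forms, so some care is needed to ensure that the relevant exponentials remain algebraically independent over the ground field generated by the parameters. The distinct-eigenvalue assumption should be what lets this go through cleanly, by keeping the eigenprojections rational and avoiding degenerate collisions in the spectrum.
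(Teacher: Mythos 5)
Your proposal is correct and follows essentially the same route as the paper: the paper's proof simply observes that the dimension formula and the irreducibility/unirationality statements are Theorems 2.6 and 3.6 of the cited Gibbs-variety papers, and that those proofs only use the existence of a rational parametrisation of $X$ by $y_1,\dots,y_d$, never the linearity of that parametrisation — which is precisely the reduction you carry out (your transcendence-degree bookkeeping and spectral-decomposition parametrisation are the content of those cited proofs). The only minor imprecision is attributing the algebraic-independence input to a Lindemann--Weierstrass-type statement; the relevant tool in the function-field setting is an Ax--Schanuel-type result, but you correctly defer to the argument in the cited work, so this does not affect the proof.
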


\begin{proof}
    This proposition generalises \cite[Thm.\ 2.6]{pavlov2023logsparse} and \cite[Thm.\ 3.6]{pavlov2022gibbs}. Proofs of these statements carry over to the case of unirational varieties, since they only use the fact that an LSSM can be parametrised by rational functions in $y_1,\ldots, y_d$ and do not depend on these functions being linear.
\end{proof}

Note that symbolic \cite[Alg.\ 1]{pavlov2022gibbs} and numerical \cite[Alg.\ 1]{pavlov2023logsparse} implicitisation algorithms for Gibbs varieties generalise accordingly. 
\begin{example}
    Again, consider the 3-chain graph $G$ in the qubit case. The associated unirational variety is $X_G = \{K\otimes L\otimes \Id_2 + \Id_2\otimes M\otimes N \mid K,L,M,N\in\Sym^2\}\subseteq \Sym^8$. The dimension of $X$ is equal to $10$ inside the $36$-dimensional space of symmetric $(8\times 8)$-matrices. The Gibbs variety $\gv(X)$ is a $14$-dimensional irreducible variety cut out by nine linear forms and 66 quadratic equations in $\Sym^8$. These results were obtained by using techniques of numerical algebraic geometry. More precisely, we create a sample of points on $\gm(X)$ and then interpolate with polynomials of a fixed degree by setting up a Vandermonde matrix and computing its kernel via QR-decomposition to obtain a sparse representation, see \cite{breiding2018learning}; this procedure yields polynomials of degree one and two. As these equations cut out an irreducible variety of the correct dimension, we obtain a generating set of the prime ideal of $\gv(X)$.\par
    The equations we obtained exhibit a remarkably simple structure; e.g.\ all polynomials have coefficients $\pm 1$ and consist of at most eight terms. It would be very interesting to obtain a theoretical explanation of this phenomenon.
\end{example}

\section{Toric Varieties from Quantum Exponential Families Associated to Graphs}

In this section we explore a completely different family of Hamiltonians $\hh_G$ associated to a graph $G$. This results in quantum exponential families that have a richer structure than the ones considered in Section \ref{sec:QGMandGibbs}. However, it should be emphasised that, unlike the previous constructions, this is not a generalisation of classical graphical models. On the other hand, all the results in this section hold for general undirected graphs, not only trees.\par 

\subsection{Commuting Hamiltonians from graphs}
In quantum physics, to an undirected graph $G$ one associates an LSSM $\hh_G$ which gives rise to the definition of \emph{graph states}, used in the study of entanglement, e.g.\ \cite{graphStates}. More precisely, the graph state associated to $G$ is the \emph{stabiliser state} of $\hh_G$. Stabiliser states appear in the framework of the stabiliser formalism used in quantum error correction \cite[\S 10.5]{nielsen2002quantum}; in fact, all results in this section generalise to stabilisers. For more details, we provide an introduction to the stabiliser formalism in Appendix \ref{sec:StabForm}. However, instead of studying the graph states associated to $\hh_G$, here we focus on its Gibbs variety; this latter perspective gives yet another example of quantum exponential families.


Let $G=(V,E)$ be a graph with vertices $V=\{v_1,\dots,v_N\}$. To each vertex $v_i$, we associate a Hamiltonian $H_i = \bigotimes_{j=1}^N H_{i,j}$ with
\[
    H_{i,j} = \left\{\begin{array}{ll}
      \sigma_X   &  \text{ if } i=j,\\
      \sigma_Z   &  \text{ if } (i,j)\in E,\\
      \Id_2      &  \text{ else.}
    \end{array}\right.
\]
Here, $\sigma_X$ and $\sigma_Z$ are the \emph{Pauli-$X$ and Pauli-$Z$ matrices}
\[
    \sigma_X = \begin{pmatrix}
        0 & 1\\ 1 & 0\\
    \end{pmatrix},\quad
    \sigma_Z = \begin{pmatrix}
        1 & 0\\ 0 & -1\\
    \end{pmatrix}.
\]
Denote the linear span of this set of Hamiltonians by $\mathfrak{H}_G\coloneqq \langle H_i\mid i=1,\dots,N\rangle $. The Hamiltonians $H_i$ are elements of the \emph{Pauli group} $\P_N$ where
\[
    \P_1 \coloneqq \{\pm \Id_2, \pm i\Id_2, \pm \sigma_X, \pm i\sigma_X, \pm \sigma_Y, \pm i \sigma_Y, \pm \sigma_Z, \pm i\sigma_Z\}
\]
and $\P_N$ is the set of all $N$-fold tensor products of elements of $\P_1$ equipped with multiplication as the group operation. Here, $\sigma_Y$ denotes the Pauli-$Y$ matrix 
\[
    \sigma_Y = \begin{pmatrix}
        0 & -i\\
        i & 0\\
    \end{pmatrix}.
\]

\begin{example}   
    \label{eg:GraphStatesGraph}
    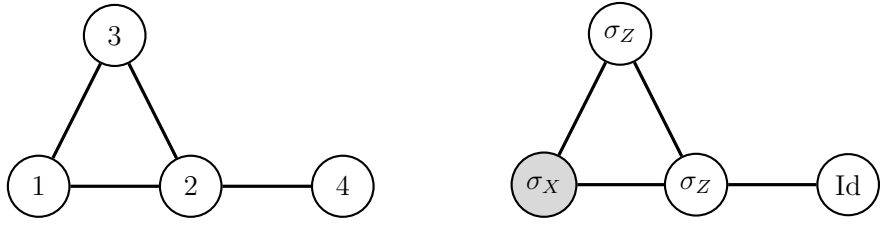
\begin{figure}
        \label{fig:graphState}
        \centering
        \begin{tikzpicture}
            \begin{scope}[every node/.style={circle,thick,draw,minimum size=0.9cm, scale=0.9}]
                \node (1) at (0,0) {1};
                \node (2) at (2,0) {2};
                \node (3) at (1,2) {3};
                \node (4) at (4,0) {4};
            \end{scope}
    
            \begin{scope}[
                  every edge/.style={draw=black,very thick}]
                \path [-] (1) edge (2);
                \path [-] (1) edge (3);
                \path [-] (3) edge (2);
                \path [-] (4) edge (2);
            \end{scope}
        \end{tikzpicture}\hspace{4em}
        ~~~~
        \begin{tikzpicture}
            \begin{scope}[every node/.style={circle,thick,draw,minimum size=0.9cm, scale=0.9}]
                \node[fill=gray!30!white] (1) at (0,0) {$\sigma_X$};
                \node (2) at (2,0) {$\sigma_Z$};
                \node (3) at (1,2) {$\sigma_Z$};
                \node (4) at (4,0) {$\Id$};
            \end{scope}
    
            \begin{scope}[
                  every edge/.style={draw=black,very thick}]
                \path [-] (1) edge (2);
                \path [-] (1) edge (3);
                \path [-] (3) edge (2);
                \path [-] (4) edge (2);
            \end{scope}
        \end{tikzpicture}

        \caption{Left: The graph $G$. Right: Illustration of the Hamiltonian $H_1$.}
    \end{figure}

    Consider the graph $G$ on four vertices depicted in Figure \ref{fig:graphState}. The Hamiltonian $H_1$ is given by
    \[
        H_1 = \sigma_X\otimes\sigma_Z\otimes\sigma_Z\otimes \Id_2.
    \]
    The linear space $\hh_G$ is spanned by the four Hamiltonians
    \[
       \sigma_X\otimes\sigma_Z\otimes\sigma_Z\otimes \Id_2, \sigma_Z\otimes\sigma_X\otimes\sigma_Z\otimes\sigma_Z, \sigma_Z\otimes\sigma_Z\otimes\sigma_X\otimes\Id_2, \Id_2\otimes\sigma_Z\otimes\Id_2\otimes\sigma_X.
    \]
\end{example}

In the following we consider the Gibbs variety of $\hh_G$. We start by showing that $\hh_G$ is a commuting family, implying that $\gv(\hh_G)$ is \emph{toric} after a linear change of coordinates and the Gibbs manifold $\gm(\hh_G)$ is semialgebraic \cite[Thm.\ 2.7]{pavlov2022gibbs}.

\begin{lemma}
    Any $H, H'\in\hh_G$ commute.
\end{lemma}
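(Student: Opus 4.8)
The plan is to reduce the statement to the pairwise commutativity of the generators $H_1,\dots,H_N$ of $\hh_G$. Since the commutator is bilinear, writing $H=\sum_i c_i H_i$ and $H'=\sum_{i'} c'_{i'} H_{i'}$ gives $[H,H']=\sum_{i,i'} c_i c'_{i'}[H_i,H_{i'}]$, so it suffices to prove $[H_i,H_{i'}]=0$ for every pair $i,i'$.

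First I would exploit the tensor structure. Since $H_i=\bigotimes_j H_{i,j}$ and $H_{i'}=\bigotimes_j H_{i',j}$, the product $H_iH_{i'}$ equals $\bigotimes_j\bigl(H_{i,j}H_{i',j}\bigr)$, and similarly for $H_{i'}H_i$. Each local factor is a Pauli matrix, and two Paulis either commute or anticommute: $\sigma_X$ and $\sigma_Z$ anticommute, whereas any pair involving $\Id_2$ or two equal factors commutes. Hence $H_iH_{i'}=(-1)^{s}H_{i'}H_i$, where $s$ is the number of positions $j$ at which $H_{i,j}$ and $H_{i',j}$ anticommute, i.e.\ at which one factor is $\sigma_X$ and the other $\sigma_Z$.

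It then remains to show that $s$ is even for every pair, which is the crux of the argument; I would handle it by a short case analysis using the definition of $H_{i,j}$. Anticommutation at position $j$ requires one of the two factors to be $\sigma_X$, which forces $j\in\{i,i'\}$. At $j=i$ we have $H_{i,i}=\sigma_X$, and $H_{i',i}=\sigma_Z$ precisely when $(i,i')\in E$ (and $\Id_2$ otherwise); by symmetry the same holds at $j=i'$. Thus either $(i,i')\notin E$ and $s=0$, or $(i,i')\in E$ and both distinguished positions contribute, giving $s=2$. In either case $s$ is even, so the generators commute and the claim follows.

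The main obstacle, such as it is, lies in the bookkeeping at the two distinguished positions $j=i$ and $j=i'$. The structural fact that makes $s$ even is the symmetry of the adjacency relation of $G$: it guarantees that position $i$ contributes an anticommutation if and only if position $i'$ does, so the two contributions always cancel in parity.
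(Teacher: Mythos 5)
Your proof is correct and follows essentially the same route as the paper: reduce to generators by bilinearity, observe that commutation of Pauli products is governed by the parity of the number of positions where one factor is $\sigma_X$ and the other $\sigma_Z$, and check that this count is $0$ or $2$ depending on whether $v_i$ and $v_{i'}$ are adjacent. Your explicit justification of the parity criterion via the tensor factorisation $H_iH_{i'}=\bigotimes_j(H_{i,j}H_{i',j})$ is a welcome touch the paper leaves implicit, but the argument is the same.
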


\begin{proof}
    W.l.o.g.\ assume $H$ and $H'$ are generators $H_m$ and $H_n$ of $\hh_G$. Note that the Pauli matrices satisfy the commutation relation
    \[
        [\sigma_j,\sigma_k] = 2i\epsilon_{jkl}\sigma_l,
    \]
    where $\epsilon_{jkl}$ is the Levi-Civita symbol and we denote $\sigma_1=\sigma_X,\sigma_2=\sigma_Y$ and $\sigma_3=\sigma_Z$. For $P\in\P_N$, let $\supp_X(P)\coloneqq \{j\in[N]\mid \sigma^{(j)}=\sigma_X\}$ and $\supp_Z(P)\coloneqq \{j\in[N]\mid \sigma^{(j)}=\sigma_Z\}$ denote the supports of $\sigma_X$ and $\sigma_Z$, respectively. Then two Pauli product matrices $P,Q\in\P_N$ containing only $\Id_2,\sigma_X$ or $\sigma_Z$ as tensor factors commute if and only if 
    \begin{equation}
        \label{equ:PauliComm}
         \#(\supp_X(P)\cap\supp_Z(Q)) + \#(\supp_Z(P)\cap\supp_X(Q)) \equiv 0  \mod 2.
    \end{equation}
    Let $\N(v)$ denote the set of neighbouring vertices of $v$ in $G$. Assume $v_n\in \N(v_m)$; then the left-hand side of (\ref{equ:PauliComm}) for $P=H_m$ and $Q=H_n$ becomes $\#\{m\} + \#\{n\}=2$. Finally, if $v_n\notin \N(v_m)\cup\{v_m\}$ the left-hand side of (\ref{equ:PauliComm}) is just zero.
\end{proof}

Let us recall from \cite{pavlov2022gibbs} how to obtain the toric variety and the coordinate change from $\gv(\hh_G)$. The symmetric matrices $H_1,\dots,H_N\in\hh_G$ are simultaneously diagonalisable,  i.e.\ there exist an orthogonal matrix $U$ and diagonal matrices $D_1,\dots,D_N$ such that $U^{-1}H_iU=D_i$ for $i=1,\dots,N$. The exponential of an element in $\hh_G$ can then be expressed as 
\begin{align*}
 \exp(x_1H_1+\dots +x_NH_N) = U\exp(x_1D_1+\dots +x_ND_N)U^{-1}   
\end{align*}
and thus $\gv(\hh_G) = U\cdot \gv(\mathfrak{D})\cdot U^{-1}$ where $\mathfrak{D}=\langle D_1,\dots,D_N\rangle$. 
Let $D_i = \diag(d_i), i = 1,\dots,N$ for $d_i\in \rr^{2^N}$ and let $\D = \langle d_1,\dots,d_N\rangle\subseteq \rr^{2^N}$ be the $\rr$-vector space spanned by the diagonals. 
Consider the smallest vector subspace $\D_{\qq}\subseteq\rr^{2^N}$ containing $\D$ that is spanned by elements of $\qq^{2^N}$ and choose an integral basis $a_1,\dots,a_N\in\zz^{2^N}$ of $\D_{\qq}$.
If $A$ denotes the $(N\times 2^N)$-matrix with rows $a_1,\dots,a_N$ then $\gv(\mathfrak{D})$ is the toric variety $X_A$ associated to $A$.\par 
We will need the following standard fact from the theory of quantum stabiliser codes. A proof is provided in Appendix \ref{sec:StabForm}.

\begin{lemma}[{\cite[Prop.\ 10.5]{nielsen2002quantum}}]
    \label{lem:eigPaulis}
    Let $S=\langle P_1,\dots,P_{N-k}\rangle$ be a subgroup of $\P_N$ generated  by $N-k$ independent\footnote{i.e.\ $\forall i=1,\dots,N-k\colon \langle P_1,\dots,\widehat{P_i},\dots,P_{N-k}\rangle\neq S$ where the hat denotes that this generator is omitted.} and commuting Pauli product matrices such that $-\Id_{2^N}\notin S$. Then the vector space $V_S\coloneqq \{v\in \rr^{2^N}\mid P_i v=v~\forall i=1,\dots,N-k\}$ of simultaneous $(+1)$-eigenvectors has dimension $2^k$.
\end{lemma}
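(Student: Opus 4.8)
The plan is to count the dimension of the simultaneous $(+1)$-eigenspace $V_S$ by exploiting the group structure of $S$ together with the fact that all elements of $S$ are Pauli product matrices, hence unitary with eigenvalues $\pm 1$ when $-\Id \notin S$. First I would observe that since the generators $P_1,\dots,P_{N-k}$ commute and each satisfies $P_i^2 = \Id_{2^N}$ (being a product of Pauli matrices with real, i.e. $\pm 1$, overall phase once $-\Id\notin S$ rules out imaginary signs), the group $S$ they generate is an elementary abelian $2$-group isomorphic to $(\zz/2\zz)^{N-k}$; independence of the generators guarantees that $|S| = 2^{N-k}$.

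The key step is to use the averaging projector. For each $P_i$, the operator $\tfrac{1}{2}(\Id_{2^N} + P_i)$ is the orthogonal projection onto the $(+1)$-eigenspace of $P_i$. Because the $P_i$ commute, the composite
\[
    \Pi \;=\; \prod_{i=1}^{N-k} \frac{\Id_{2^N} + P_i}{2} \;=\; \frac{1}{2^{N-k}}\sum_{P\in S} P
\]
is the orthogonal projection onto $V_S$, and thus $\dim V_S = \tr(\Pi)$. Now I would compute this trace by expanding the sum: $\tr(\Pi) = 2^{-(N-k)}\sum_{P\in S}\tr(P)$. The crucial arithmetic fact is that every non-identity element $P\in S$ is a nontrivial Pauli product matrix, and such a matrix is traceless, since it is a tensor product of single-qubit factors at least one of which is a traceless Pauli matrix $\sigma_X,\sigma_Y,$ or $\sigma_Z$ (the trace of a tensor product is the product of traces, so one vanishing factor kills the whole trace). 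Only the identity element contributes, giving $\tr(\Id_{2^N}) = 2^N$, and therefore $\dim V_S = 2^{-(N-k)}\cdot 2^N = 2^k$.

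The main obstacle, and the place where the hypothesis $-\Id_{2^N}\notin S$ is essential, is justifying that the averaging operator is genuinely a projection with the claimed image. I would need to verify two things: that no element of $S$ equals $-\Id_{2^N}$ (so that the summand $\tfrac{1}{2}(\Id+P_i)$ is not forced to be zero or rank-deficient in an unexpected way) and, more subtly, that each $P_i$ really has real eigenvalues $\pm 1$ rather than $\pm i$. This is where I would invoke that the generators are commuting and independent with $-\Id\notin S$: these conditions force each $P_i^2 = +\Id_{2^N}$, pinning the global phase to $\pm 1$ and making $\tfrac{1}{2}(\Id+P_i)$ a bona fide projector. Once this phase issue is dispatched the rest is the clean character-sum computation above; the remaining bookkeeping (that $\Pi$ is idempotent and self-adjoint, that its image is exactly $V_S$) is routine and follows from the pairwise commutativity of the projectors $\tfrac12(\Id+P_i)$.
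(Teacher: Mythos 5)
Your argument is correct, and it takes a genuinely different route from the paper's. The paper proves the lemma by constructing, for every sign pattern $\xx\in(\zz/2\zz)^{N-k}$, the projector $P^{\xx}_S=\frac{1}{2^{N-k}}\prod_j(\Id+(-1)^{x_j}P_j)$, showing via the symplectic $(\zz/2\zz)^{2N}$-representation of Pauli words that independence of the generators yields, for each $i$, a Pauli element anticommuting with $P_i$ and commuting with the rest; conjugation by these elements shows all $2^{N-k}$ images have equal dimension, and since they are pairwise orthogonal and the projectors sum to $\Id_{2^N}$, each image has dimension $2^N/2^{N-k}=2^k$. You instead compute $\dim V_S=\tr(\Pi)$ directly and kill every non-identity term of the group average $\Pi=\frac{1}{2^{N-k}}\sum_{P\in S}P$ using the fact that a nontrivial Pauli product is traceless. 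In your proof, independence enters only to guarantee that the $2^{N-k}$ subset-products $\prod_{i\in T}P_i$ are distinct (so the expanded product of commuting rank-one-factor projectors really is the average over $S$ and no cancellation or double-counting occurs), and $-\Id_{2^N}\notin S$ enters exactly where you say: it pins the phases to $\pm1$, making each $P_i$ Hermitian with $P_i^2=\Id$ and hence $\frac12(\Id+P_i)$ an honest orthogonal projector, and it rules out a summand equal to $-\Id_{2^N}$ whose trace would not vanish. Your route is shorter and avoids the symplectic linear algebra entirely; what the paper's route buys is the stronger structural statement that \emph{all} sign patterns give simultaneous eigenspaces of the same dimension $2^k$, i.e.\ an equidimensional orthogonal decomposition of $\cc^{2^N}$ indexed by $(\zz/2\zz)^{N-k}$, which is precisely the fact recorded in Remark \ref{rem:eigenspaceLemma} and used in the proof of Theorem \ref{thm:mainThmGraphStates}. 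That said, your trace computation extends verbatim to each $P^{\xx}_S$ (only the $T=\emptyset$ term survives regardless of the signs), so the stronger statement is also within reach of your method with one extra line.
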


\begin{remark}
    \label{rem:eigenspaceLemma}
    Note that any Pauli product matrix $P\in\P_N$ has eigenvalues $\pm 1$, both with multiplicity $2^{N-1}$ each. Lemma \ref{lem:eigPaulis} can then be rephrased as follows: the $(\pm 1)$-eigenspaces of $P_i$ intersect the eigenspaces of all $P_1,\dots,P_{i-1}$ in half their dimension. This fact is essential to the next theorem establishing a strong connection between quantum exponential families and classical algebraic statistics.
\end{remark}

\begin{theorem}
    \label{thm:mainThmGraphStates}
    For any graph $G$ with $N$ vertices, $\gv(\hh_G)$ is an independence model on $N$ binary random variables after a linear change of coordinates.
\end{theorem}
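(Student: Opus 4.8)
The plan is to show that after the simultaneous diagonalisation described just before the statement, the toric variety $\gv(\mathfrak{D}) = X_A$ is exactly the independence model (Segre-type variety) of $N$ binary random variables. Concretely, I would first invoke the commutativity lemma to guarantee that the generators $H_1,\dots,H_N$ of $\hh_G$ are simultaneously orthogonally diagonalisable, so that $\gv(\hh_G) = U\cdot X_A\cdot U^{-1}$ for the integer matrix $A$ whose rows span $\D_\qq$. Since conjugation by the fixed orthogonal matrix $U$ is a linear change of coordinates on $\Sym^{2^N}$, it suffices to identify $X_A$ with the independence model. The independence model on $N$ binary variables is the toric variety associated to the matrix whose columns are the $2^N$ vectors of the form $(\pm 1,\dots,\pm 1)$ ranging over all sign patterns (equivalently, the $N$-fold Segre embedding of $N$ copies of $\pp^1$), so the whole problem reduces to understanding the entries of the diagonal matrices $D_i$.

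The key computation is to determine these diagonal entries, and this is where Remark \ref{rem:eigenspaceLemma} and Lemma \ref{lem:eigPaulis} do the work. Each $H_i$ is a Pauli product matrix with eigenvalues $\pm 1$, each of multiplicity $2^{N-1}$. After simultaneous diagonalisation, the $j$-th diagonal entry of $D_i$ is the eigenvalue ($+1$ or $-1$) of $H_i$ on the $j$-th common eigenvector. The content of the eigenspace lemma is that the common $(\pm 1)$-eigenspaces of $H_1,\dots,H_N$ are all one-dimensional and are indexed precisely by the $2^N$ sign patterns $\varepsilon\in\{\pm 1\}^N$: imposing the eigenvalue condition for each successive $H_i$ halves the dimension of the intersection, so that after all $N$ conditions we are left with $2^N/2^N = 1$ dimensional common eigenspaces, one for each choice of signs. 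Thus the columns of $A$ (indexed by the common eigenvectors) are exactly the $2^N$ vectors $(\varepsilon_1,\dots,\varepsilon_N)^T$ with $\varepsilon_i\in\{\pm 1\}$, which is the design matrix of the independence model. I would also note that since the $H_i$ are independent and $-\Id\notin\langle H_1,\dots,H_N\rangle$ (as the $H_i$ are distinct nontrivial Pauli products), the hypotheses of Lemma \ref{lem:eigPaulis} are met with $k=0$, confirming all $2^N$ sign patterns genuinely occur.

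The main obstacle is verifying the bookkeeping that the matrix $A$ really coincides with the $\pm 1$ design matrix of the independence model, rather than merely being $\qq$-equivalent to it. One must check that the diagonals $d_1,\dots,d_N$ are already integral (entries $\pm 1$) and $\qq$-linearly independent, so that they themselves serve as the integral basis $a_1,\dots,a_N$ of $\D_\qq$ up to the usual lattice subtleties; independence of the $H_i$ in $\P_N$ translates into linear independence of the $d_i$ via the nondegeneracy built into Lemma \ref{lem:eigPaulis}. A minor technical point is that the standard independence model is often written projectively with an all-ones row appended to $A$; I would address this by observing that the $\pm 1$ parametrisation and the $0/1$ parametrisation differ by an affine transformation of the exponents, which is precisely a monomial (Laurent) change of coordinates on the torus and hence does not change the underlying toric variety. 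Once this identification is in place, combining it with the conjugation by $U$ yields the claimed linear change of coordinates taking $\gv(\hh_G)$ to the independence model.
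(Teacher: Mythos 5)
Your proposal is correct and takes essentially the same approach as the paper: simultaneous diagonalisation of the commuting generators, followed by Lemma \ref{lem:eigPaulis}/Remark \ref{rem:eigenspaceLemma} to identify the columns of $A$ with the $2^N$ sign vectors of the hypercube $[-1,1]^N$, hence $X_A$ with (an affine patch of) the independence model. You additionally verify details the paper leaves implicit, namely the hypotheses of Lemma \ref{lem:eigPaulis} for $\hh_G$ and the passage between the $\pm 1$ and $0/1$ design matrices, which is a welcome tightening rather than a deviation.
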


\begin{proof}
    As shown above, $\gv(\hh_G) = U\cdot X_A\cdot U^{-1}$ where the rows of $A$ are the diagonal entries of $D_i = U^{-1}H_i U$ for $i=1,\dots,N$ and $X_A$ is the (affine) toric variety associated to $A$. By Remark \ref{rem:eigenspaceLemma}, we can assume $A$ to be of the form 
    \begin{equation}
        \label{equ:matrixIndMod}
        A = \scalebox{0.8}{$
        \begin{pmatrix*}[r]
            -1 & -1 & -1 & -1 & \dots & -1 & -1 & -1 & -1 & 1 & 1 & 1 & 1 & \dots & 1 & 1 & 1 & 1 \\
            -1 & -1 & -1 & -1 & \dots & 1 & 1 & 1 & 1 & -1 & -1 & -1 & -1 & \dots & 1 & 1 & 1 & 1\\
            \vdots & \vdots & \vdots & \vdots & \ddots & \vdots& \vdots& \vdots& \vdots& \vdots& \vdots& \vdots& \vdots & \ddots & \vdots& \vdots& \vdots& \vdots\\
            -1 & -1 & 1 & 1 & \dots & -1 & -1 & 1 & 1 & -1 & -1 & 1 & 1 & \dots & -1 & -1 & 1 & 1\\
            -1 & 1 & -1 & 1 & \dots & -1 & 1 & -1 & 1 & -1 & 1 & -1 & 1 & \dots & -1 & 1 & -1 & 1\\
        \end{pmatrix*}
        $}
    \end{equation}
    i.e.\ the columns of $A$ are the vertices of the $N$-dimensional hypercube $[-1,1]^N$. Thus, $X_A$ is an independence model on $N$ binary random variables.
\end{proof}

\begin{remark}
    The variety $X_A$ above is not the independence model in its standard description. 
    For example, for $N=3$ we have
    \[
    A = \begin{pmatrix*}[r]
        -1 & -1 & -1 & -1 & 1 & 1 & 1 & 1\\
        -1 & -1 & 1 & 1 & -1 & -1 & 1 & 1\\
        -1 & 1 & -1 & 1 & -1 & 1 & -1 & 1\\
    \end{pmatrix*};
    \]
    the prime ideal of $X_A$ is 
    \begin{align*}
        I_A = & \langle x_1x_8 - 1, x_6x_7-x_5x_8, x_4x_7-x_3x_8, x_2x_7-1, x_4x_6-x_2x_8, \\
        & x_3x_6-1, x_4x_5-1, x_3x_5-x_1x_7, x_2x_5-x_1x_6, x_2x_3-x_1x_4 \rangle \subseteq \cc[x_1,\dots,x_8].
    \end{align*}
    The matrix $A'$
    of the independence model has as columns the vertices of  the hypercube $[0,1]^{N}$. Adding a row
    of ones to $A$ and to $A'$ yields the same variety. Thus, $X_A$ is an affine patch of the independence model, i.e.\ of the Segre variety $\sigma(\pp^1\times\pp^1\times\pp^1)\subseteq \pp^7$.
\end{remark}

A priori, it is not obvious how to obtain defining equations for $\gv(\hh_G)$ computationally in an efficient manner. However, Theorem \ref{thm:mainThmGraphStates} gives rise to Algorithm \ref{alg:graphStates} making computations of defining ideals for graphs with four or more vertices feasible.

\begin{algorithm}[h!]
    \caption{Computing defining equations of $\gv(\hh_G)$}
    \label{alg:graphStates}
    \hspace*{-25.5em} \textbf{Input:} A graph $G$\\
    \hspace*{-16.2em} \textbf{Output:} Polynomials defining $\gv(\hh_G)$
    \begin{algorithmic}[1]
    \State Compute $\hh_G=\langle H_1,\dots,H_N\rangle$
    \State $U \gets$ matrix simultaneously diagonalising $H_1,\dots,H_N$
    \State $I \gets$ ideal of the independence model defined by $A$ as in (\ref{equ:matrixIndMod}) in variables $p_{i,i}$ for $i=1,\dots,2^N$
    \State $Y = (y_{ij}) \gets$ linear coordinate change according to $UPU^{-1}$ where $P=(p_{ij})$
    \For{all generators $g_k$ of $I$}
        \State $h_k \gets g_k$ changed to $Y$-coordinates
    \EndFor
    \State $J \gets$ ideal generated by all $h_k$ and $y_{ij}=0$ for all $i\neq j\in\{1,\dots,2^N\}$
    \State \Return a set of generators of $J$
    \end{algorithmic}
\end{algorithm}

Note that Step 3 in Algorithm \ref{alg:graphStates} can be pre-computed.
This allows to reduce finding the equations of $\gv(\hh_G)$ to a linear 
algebra problem, therefore reducing the computational complexity. An implementation of this algorithm is available at \cite{mathrepo}.

\begin{example}
    Let $G$ be the graph from Example \ref{eg:GraphStatesGraph}. Using Algorithm \ref{alg:graphStates}, we compute that $\gv(\hh_G)\subseteq \Sym^{16}$ is defined by 296 quadratic equations in 136 variables. The average number of terms of each generator is about 1982. This highlights the fact that the equations defining the Gibbs variety can be quite involved. It would be impossible to compute these equations without using the additional structure
    of $\hh_G$: both \cite[Alg.\ 1]{pavlov2022gibbs} and \cite[Alg.\ 1]{pavlov2023logsparse} failed to compute this example.
\end{example}

\subsection{Quantum information projections}
Given an arbitrary quantum state $\rho$ we can ask for the member $\tilde{\rho}\in\Q$ of some quantum exponential family $\Q = \gm(\L)$ ``closest'' to $\rho$. Here, ``closest'' means minimising the \emph{quantum relative entropy}.

\begin{definition}
    The \emph{quantum relative entropy} $D(\rho || \sigma)$ between a state $\rho$ and a positive semidefinite operator $\sigma$ is 
    \[
        D(\rho || \sigma) \coloneqq \left\{\begin{array}{ll}
          \tr(\rho(\log(\rho) - \log(\sigma)) )  &  \text{ if } \supp(\rho)\subseteq \supp(\sigma)\\
          +\infty   &  \text{ otherwise.}
        \end{array}\right.
    \]
    Here, the support of a linear operator is the subspace orthogonal to the kernel with respect to the standard inner product on $\cc^n$, and all logarithms are taken to have base two.
\end{definition}
This is a quantum generalisation of the Kullback--Leibler divergence in classical information theory. Note that, similarly to the Kullback--Leibler divergence, the quantum relative entropy is \emph{not} an actual metric as it is not symmetric and does not satisfy the triangle inequality. However, it does satisfy non-negativity (quantum Gibbs' inequality). More precisely, if $\tr(\sigma)\leq 1$ we have $D(\rho || \sigma)\geq 0$ with equality if and only if $\rho=\sigma$. See \cite[\S 11.8]{wilde2013quantum} for an extensive reference.

\begin{definition} 
    \label{def:infprojection}
    The \emph{quantum information projection} $\tilde{\rho}$ of a quantum state $\rho$ to a quantum exponential family $\Q$ is the element of $\Q$ which is the closest to $\rho$ with respect to the quantum relative entropy
    \[
    \tilde{\rho}=\underset{\rho'\in \mathcal{Q}}{\argmin}~ D(\rho||\rho').
    \]
\end{definition}

The quantum information projection is unique and has been characterised in the case where $\Q$ consists of exponentials of local Hamiltonians \cite[Lem.\ 2]{niekamp2013computing}. Since the Gibbs manifold considered in the previous subsection is semialgebraic, we can use algebraic techniques to find the quantum information projection in this case. The following theorem gives an algebraic characterisation of the quantum information projection for a quantum exponential family $\Q$ of commuting Hamiltonians, in particular for $\Q = \gm(\hh_G)$. 

\begin{theorem}
    \label{thm:QIP}
    Let $\hh = \langle H_1,\dots, H_k\rangle$ be a linear span of commuting Hamiltonians in $\Sym^{d}_{\rr}$, fix a positive definite matrix $\rho\in \Sym^d_{\rr}$ and let $b_i \coloneqq \langle H_i,\rho\rangle = \tr(H_i\rho)$ for $i=1,\dots,k$. Let $M_{\rho}$ be the affine linear space defined by 
    \[
        M_{\rho} \coloneqq \{A\in \Sym^d_{\rr} \mid \langle H_i, A\rangle = b_i \text{ for } i=1,\dots,k\}.
    \]
    Then $M_{\rho}\cap \gm(\hh)$ consists of a unique point $\rho^*$. It is the maximiser of the von Neumann entropy inside $M_{\rho}$ and the quantum information projection of $\rho$ to $\gm(\hh)$.
\end{theorem}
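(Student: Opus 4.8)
The plan is to establish the theorem in three stages: first show $M_\rho \cap \gm(\hh)$ is nonempty, then show uniqueness, and finally identify the common point as both the entropy maximiser and the information projection. The key structural fact I would exploit is that the $H_i$ commute, so they are simultaneously diagonalisable: there is an orthogonal $U$ with $U^{-1}H_iU = D_i$ diagonal for all $i$. This reduces everything to the diagonal (classical) setting, where the statement becomes a version of the classical Birch's Theorem for exponential families.

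\medskip

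\emph{Existence via convex optimisation.} I would consider the strictly convex function $F(x) = \tr(\exp(\sum_i x_i H_i)) - \sum_i x_i b_i$ on $\rr^k$, whose gradient is $\partial_i F(x) = \tr(H_i \exp(\sum_j x_j H_j)) - b_i$. A critical point $x^*$ yields $\rho^* = \exp(\sum_i x_i^* H_i) \in \gm(\hh)$ satisfying $\langle H_i, \rho^*\rangle = b_i$, i.e.\ $\rho^* \in M_\rho \cap \gm(\hh)$. To guarantee a critical point exists, I would argue that $F$ is coercive: since $\rho$ is positive definite, the vector $(b_i) = (\tr(H_i\rho))$ lies in the relative interior of the appropriate moment region, which forces $F \to +\infty$ as $\|x\| \to \infty$ along every direction. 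Passing to the $U$-diagonalised coordinates makes this a statement about the classical log-partition function of the independence-type model, where coercivity is standard.

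\medskip

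\emph{Uniqueness.} Because $\exp$ is injective on symmetric matrices and $F$ is strictly convex, the critical point $x^*$ is unique, hence $\rho^*$ is unique; intersecting a manifold parametrised by the strictly convex exponential with the affine space $M_\rho$ cannot produce two points. I would phrase this cleanly by noting that if two points of $\gm(\hh)$ had the same moments $\langle H_i,\cdot\rangle$, their preimages under the (injective, gradient-of-convex) moment map would coincide.

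\medskip

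\emph{Entropy maximiser and information projection.} For the variational characterisation I would set up the Lagrangian for maximising $S(A) = -\tr(A\log A)$ subject to the linear constraints $\langle H_i, A\rangle = b_i$ and $\tr A = \tr\rho$ (the trace constraint being the $H_0 = \Id$ case). The stationarity condition gives $\log A = \sum_i \nu_i H_i + \text{const}$, so the maximiser lies in $\gm(\hh)$ and is therefore $\rho^*$. Finally, for the information projection I would use the Pythagorean-type identity for quantum relative entropy: for any $\rho' = \exp(\sum x_i H_i) \in \gm(\hh)$, since $\rho^*$ and $\rho$ share the moments $b_i = \langle H_i,\cdot\rangle$ and $\log\rho'$ lies in $\hh$, one gets $D(\rho\|\rho') = D(\rho\|\rho^*) + D(\rho^*\|\rho')$, which is minimised exactly at $\rho' = \rho^*$ by nonnegativity of quantum relative entropy. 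This decomposition is where commutativity is genuinely used: $\rho^*$ and $\rho'$ commute (both are functions of the $H_i$), so the cross term $\tr((\rho-\rho^*)(\log\rho^* - \log\rho'))$ reduces to $\sum_i x_i\langle H_i, \rho-\rho^*\rangle = 0$ without operator-ordering subtleties.

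\medskip

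\textbf{The main obstacle} I anticipate is the existence/coercivity step: ensuring the constrained optimum is attained in the \emph{interior} (i.e.\ at a positive definite matrix lying in $\gm(\hh)$ rather than on the boundary of the PSD cone) requires that $\rho$ being positive definite places $(b_i)$ strictly inside the moment polytope. Handling this rigorously means translating the faces of the moment image back through $U$ and invoking the classical theory of minimal exponential families for the independence model from Theorem~\ref{thm:mainThmGraphStates}; the clean cross-term cancellation in the Pythagorean identity, by contrast, should follow routinely from commutativity.
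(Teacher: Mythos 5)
Your proposal is correct in outline but takes a genuinely different route from the paper's. The paper delegates existence, uniqueness and entropy maximisation to the cited theorem on Gibbs manifolds \cite{pavlov2022gibbs}, and then identifies $\rho^*$ with the information projection by conjugating with the simultaneous diagonaliser $U$, recognising the resulting problem as classical maximum likelihood estimation for the toric model $\gm(\dd)$ with data $u=\operatorname{diag}(U^{-1}\rho U)$, and invoking the \emph{classical} Birch's Theorem together with the computation $(Au)_i=\tr(H_i\rho)=b_i$. You instead argue intrinsically: existence and uniqueness via coercivity and strict convexity of $x\mapsto\tr\exp(\sum_i x_iH_i)-\sum_i x_ib_i$ (your observation that positive definiteness of $\rho$ is exactly what places $(b_i)$ in the interior of the moment region, hence forces coercivity, is the correct technical crux and sits precisely where the paper's citation does the work), and the projection property via the Pythagorean identity $D(\rho\,\|\,\rho')=D(\rho\,\|\,\rho^*)+D(\rho^*\,\|\,\rho')$, whose cross term $\tr\bigl((\rho-\rho^*)(\log\rho'-\log\rho^*)\bigr)$ vanishes because $\log\rho'-\log\rho^*\in\hh$ while $\rho-\rho^*$ is orthogonal to $\hh$. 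Your route is self-contained and, notably, does not essentially use commutativity (convexity of $\tr\exp$ and the cross-term cancellation hold for arbitrary linear spaces of Hamiltonians); the paper's reduction to a classical toric MLE is exactly where commutativity enters and is what makes the ``quantum Birch'' analogy explicit, which is the point of the remark following the theorem.

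Three small corrections. The cross term is $\sum_i(x_i-x_i^*)\langle H_i,\rho-\rho^*\rangle$ rather than $\sum_i x_i\langle H_i,\rho-\rho^*\rangle$ (still zero). Strict convexity of your $F$ requires $H_1,\dots,H_k$ to be linearly independent, so pass to a basis of $\hh$ first. Most substantively, your final step ``$D(\rho^*\,\|\,\rho')\geq 0$ with equality iff $\rho^*=\rho'$'' holds only when $\tr\rho'\leq 1$; over the unnormalised Gibbs manifold, $D(\rho\,\|\,\exp(\sum_i x_iH_i))$ is affine in $x$ and the minimisation in Definition~\ref{def:infprojection} is unbounded below. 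So, exactly like the paper (which implicitly normalises when passing to the MLE), you must restrict $\gm(\hh)$ to its trace-one slice or assume $\Id\in\hh$; the same normalisation is hidden in your extra Lagrange constraint $\tr A=\tr\rho$, which is not part of $M_\rho$ as defined in the statement. This is a defect shared with the paper's own treatment rather than a flaw specific to your argument, but your Pythagorean formulation makes it visible and should be stated explicitly.
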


\begin{remark}
    This result generalises Birch's Theorem \cite[Prop.\ 2.1.5]{berndLectures} to quantum exponential families that become toric after a linear change of coordinates.
\end{remark}

\begin{proof}
    The fact that $M_{\rho}\cap \gm(\hh) = \{\rho^*\}$ and $\rho^*$ is the unique point maximising the von Neumann entropy is a direct consequence of \cite[Thm.\ 5.1]{pavlov2022gibbs}. It remains to show that this point is the quantum information projection of $\rho$ to $\gm(\hh)$.\par 
    Let $\tilde{\rho}$ be the quantum information projection of $\rho$ to $\gm(\hh)$. As in the discussion preceding Lemma \ref{lem:eigPaulis}, let $U$ be the matrix diagonalising $\hh$ into $\dd = \langle D_1,\dots,D_k\rangle$, i.e.\ $H_i = UD_iU^{-1}$ for $i=1,\dots,k$, so $\tilde{\rho}\in U\gm(\dd)U^{-1}$. Minimising the quantum relative entropy between $\rho$ and $\gm(\hh)$ is then equivalent to minimising the quantum relative entropy between $U^{-1}\rho U$ and $\gm(\dd)$. Let $\sigma\coloneqq U^{-1}\rho U$; then we want to maximise $\tr(\sigma\log(\Delta))$ over diagonal matrices $\Delta\in\gm(\dd)$, i.e.\ find $\tilde{\rho}' = \diag(\hat{\delta})$ such that
    \[
        \tilde{\rho}' = \underset{\Delta\in\gm(\dd)}{\argmax}~ \sum_j \sigma_{jj}\log \Delta_{jj}.
    \]
    This is the same problem as finding the maximum likelihood estimator on the exponential family $\gm(\dd)$ given data $u = (\sigma_{11},\sigma_{22},\dots,\sigma_{dd})$. Note that every coordinate of $u$ is nonzero. By Birch's Theorem, $\hat{\delta}$ is the unique point on $\gm(\dd)$ satisfying $A\hat{\delta}=Au$ where $A$ is the matrix whose $i$th row is the diagonal of $D_i$ as in \eqref{equ:matrixIndMod}. Observe that 
    \[
        (Au)_i = \sum_j (D_i)_{jj} \sigma_{jj} = \tr(D_i \sigma) = \tr(D_i U^{-1}\rho U) = \tr(U D_i U^{-1}\rho) = \tr(H_i\rho) = b_i;
    \]
    analogously, $(A\hat{\delta})_i = \tr(H_i \tilde{\rho})$. This shows $\tilde{\rho}\in M_{\rho}\cap \gm(\hh)$ and thus $\tilde{\rho}=\rho^*$. 
\end{proof}

Theorem \ref{thm:QIP} provides a way to compute the quantum information projection to $\gm(\hh_G)$ algorithmically by using numerical algebraic geometry; concretely, one can first compute $M_{\rho}\cap \gv(\hh_G)$ and then choose the unique point lying in the PSD cone.

\begin{example}
    Consider the positive definite matrix
    \[
        \rho = \begin{pmatrix*}[r]
            84 & -22 & 11 & -51 & -15 & -8 & -26 & 4\\
            -22 & 51 & -5 & -7 & 23 & -13 & 17 & 40\\
            11 & -5 & 51 & 25 & -16 & -3 & 9 & 28\\
            -51 & -7 & 25 & 70 & -19 & 17 & 18 & -26\\
            -15 & 23 & -16 & -19 & 92 & 32 & 23 & 24\\
            -8 & -13 & -3 & 17 & 32 & 62 & 2 & -36\\
            -26 & 17 & 9 & 18 & 23 & 2 & 94 & 10\\
            4 & 40 & 28 & -26 & 24 & -36 & 10 & 109\\  
        \end{pmatrix*}
    \]
    and the 3-chain graph $G$. The intersection $M_{\rho}\cap \gv(\hh_G)$ consists of six real matrices. Only one of them is positive semidefinite, namely the matrix 
    \[
        \tilde{\rho} = \scalebox{0.9}{$\begin{pmatrix*}[r]
            20.5417  & \minus 12.5   &   \minus 20.5   &   \minus 12.4746  &  \minus 5.5     &   3.34685 &  \minus 5.48884 &  \minus 3.34006\\
            \minus 12.5   &    20.5417  &  12.4746  &  20.5    &    3.34685  &  \minus 5.5   &     3.34006  &  5.48884\\
            \minus 20.5   &    12.4746  &  20.5417  &  12.5   &     5.48884 &  \minus 3.34006 &   5.5    &    3.34685\\
            \minus 12.4746 &   20.5   &    12.5   &    20.5417  &   3.34006 &  \minus 5.48884 &   3.34685 &   5.5\\
            \minus 5.5     &   3.34685  &  5.48884  &  3.34006 &  20.5417  & \minus 12.5  &     20.5    &   12.4746\\
            3.34685 &  \minus 5.5    &   \minus 3.34006 &  \minus 5.48884 & \minus 12.5   &    20.5417   &\minus12.4746 &  \minus20.5\\
            \minus 5.48884  &  3.34006  &  5.5   &     3.34685 &  20.5   &   \minus12.4746  &  20.5417  &  12.5\\
            \minus 3.34006  &  5.48884  &  3.34685  &  5.5    &   12.4746 &  \minus20.5   &    12.5    &   20.5417\\
        \end{pmatrix*}$}.
    \]
    This matrix is the quantum information projection of $\rho$ to $\gm(\hh_G)$.
\end{example}

{\bf Acknowledgements}. 
E.D.\ was supported by the Funda\c{c}\~ao para a Ci\^encia e a Tecnologia (FCT) grant 2020.01933-CEECIND, and partially supported by CMUP under the FCT grant UIDB-00144-2020. The authors would like to thank Bernd Sturmfels for bringing the topic to their attention and Michael Wolf for useful discussions on graph states.

\bibliographystyle{plain}
\bibliography{bibliography}

\appendix 
\section{Stabiliser Formalism}
\label{sec:StabForm}

The purpose of this appendix is to provide a proof of Lemma \ref{lem:eigPaulis} to make the paper self-contained, and to generalise Theorem \ref{thm:mainThmGraphStates} by introducing the stabiliser formalism. This framework is commonly used in quantum error correction for a very convenient description of quantum code spaces. In our exposition we follow \cite[\S 10.5.1]{nielsen2002quantum}.\par 

Any subgroup $S\leq \P_{N}$ of the Pauli group acts on the vector space of $N$ qubit states by multiplication. The \emph{vector space stabilised by $S$} is denoted $V_S$ and we call $S$ the \emph{stabiliser} of $V_S$. In quantum error correction, $V_S$ is the code space. \par 

Let $S$ be generated by $S=\langle p_1,\dots,p_l\rangle$; the generators $p_1,\dots,p_l$ are called \emph{independent} if for all $i=1,\dots,l \colon \langle p_1,\dots,\hat{p_i},\dots,p_l \rangle \lneq S$, where the hat means that the element is omitted.

\begin{lemma}[{\cite[Prop.\ 10.5]{nielsen2002quantum}}]
    \label{lem:appStab}
    Let $S=\langle p_1,\dots,p_{N-k}\rangle \leq \P_N$ be generated by $N-k$ independent and commuting Pauli product matrices such that $-\Id_{2^N}\notin S$. Then $V_S$ has dimension $2^k$.
\end{lemma}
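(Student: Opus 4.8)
The plan is to compute $\dim V_S$ as the trace of the orthogonal projector onto $V_S$. First I would record two elementary facts about $S$. Since each generator $p_i$ is a Hermitian Pauli product it satisfies $p_i^2 = \Id_{2^N}$, and since the generators commute, every product of them is again Hermitian; together with the independence hypothesis this shows that the assignment $(\epsilon_1,\dots,\epsilon_{N-k}) \mapsto p_1^{\epsilon_1}\cdots p_{N-k}^{\epsilon_{N-k}}$ is an isomorphism $(\zz/2\zz)^{N-k} \xrightarrow{\sim} S$. In particular $|S| = 2^{N-k}$ and the $2^{N-k}$ products over subsets of the generators are pairwise distinct elements of $S$.

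Next I would introduce the projector. Because the $p_i$ are commuting Hermitian involutions, they are simultaneously diagonalisable with eigenvalues $\pm 1$, and each $\tfrac{1}{2}(\Id_{2^N} + p_i)$ is the orthogonal projector onto the $(+1)$-eigenspace of $p_i$. As these commute, their product
\[
    P_S \coloneqq \prod_{i=1}^{N-k} \frac{\Id_{2^N} + p_i}{2}
\]
is the orthogonal projector onto the intersection of all the $(+1)$-eigenspaces, which is exactly $V_S$. Hence $\dim V_S = \tr(P_S)$.

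Then I would expand and take the trace. Multiplying out and using the bijection from the first step,
\[
    P_S = \frac{1}{2^{N-k}} \sum_{T \subseteq \{1,\dots,N-k\}} \prod_{i\in T} p_i = \frac{1}{2^{N-k}} \sum_{g\in S} g,
\]
so $\tr(P_S) = \frac{1}{2^{N-k}} \sum_{g\in S} \tr(g)$. The identity contributes $\tr(\Id_{2^N}) = 2^N$. For every other $g \in S$ I would argue that $\tr(g)=0$: such a $g$ is Hermitian, hence cannot be $\pm i\,\Id_{2^N}$, it is not $\Id_{2^N}$ by assumption, and it is not $-\Id_{2^N}$ by the hypothesis $-\Id_{2^N}\notin S$; therefore $g$ is a Pauli product with at least one tensor factor in $\{\sigma_X,\sigma_Y,\sigma_Z\}$, and since each such factor is traceless and the trace is multiplicative over tensor products, $\tr(g)=0$. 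Consequently $\dim V_S = \tr(P_S) = 2^N/2^{N-k} = 2^k$.

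The main obstacle is precisely the vanishing $\tr(g)=0$ for $g\neq\Id_{2^N}$, as this is where every hypothesis is consumed: independence secures $|S|=2^{N-k}$ so the normalisation is correct, Hermiticity (inherited from commutativity) excludes the phases $\pm i$, and the assumption $-\Id_{2^N}\notin S$ removes the single remaining scalar element that would otherwise contribute a nonzero trace and break the count.
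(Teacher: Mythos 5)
Your proof is correct, but it takes a genuinely different route from the one in the paper. You compute $\dim V_S$ as $\tr(P_S)$ by expanding the projector as the group average $\frac{1}{2^{N-k}}\sum_{g\in S}g$ and observing that every $g\neq \Id_{2^N}$ is traceless: independence gives $|S|=2^{N-k}$ and rules out nontrivial products equal to $\Id_{2^N}$, the hypothesis $-\Id_{2^N}\notin S$ rules out $-\Id_{2^N}$, and Hermiticity rules out $\pm i\,\Id_{2^N}$, so every remaining element has a traceless tensor factor. The paper instead considers the whole family of projectors $P^{\xx}_S=\frac{1}{2^{N-k}}\prod_j(\Id_{2^N}+(-1)^{x_j}p_j)$ for $\xx\in(\zz/2\zz)^{N-k}$, proves they are pairwise orthogonal and all conjugate to $P^{\mathbf{0}}_S$ by elements of $\P_N$ (found via the symplectic representation of Pauli products over $\zz/2\zz$ and the linear independence of the rows $v_{p_i}$), and concludes that the $2^{N-k}$ images partition $\cc^{2^N}$ equidimensionally. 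Your argument is shorter and more elementary; the paper's buys more, namely that all the syndrome subspaces $\im(P^{\xx}_S)$ have the same dimension, which is the fact quoted in Remark \ref{rem:eigenspaceLemma} and used in the proof of Theorem \ref{thm:mainThmGraphStates}. One small point to tighten: the Hermiticity of the generators is not ``inherited from commutativity'' as you say at the end, but from $-\Id_{2^N}\notin S$, which forces $p_i^2=\Id_{2^N}$ and hence a real phase $\pm 1$ on each $p_i$; commutativity then only ensures that arbitrary products of the generators remain Hermitian.
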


\begin{proof}
    First note that any Pauli matrix $\sigma\in\{\sigma_X,\sigma_Y,\sigma_Z\}$ has eigenvalues $\pm 1$, and the projector on the $\pm 1$-eigenspace of $\sigma$ is $\frac{\Id_2 \pm \sigma}{2}$. For any $\xx = (x_1,\dots,x_{N-k})\in (\zz/2\zz)^{N-k}$, define
    \[
        P^{\xx}_S \coloneqq \frac{1}{2^{N-k}}\prod_{j=1}^{N-k}(\Id_{2^N} + (-1)^{x_j}p_j);
    \]
    clearly, $P^{\mathbf{0}}_S$ is the projector onto $V_S$.
    \begin{claim}
    For any $\xx\in (\zz/2\zz)^{N-k},~ \dim(\im(P^{\xx}_S)) = \dim(\im(P^{\mathbf{0}}_S))$.
    \end{claim}
    We represent a Pauli product matrix $p\in \P_N$ as a (row) vector $v_p\in (\zz/2\zz)^{2N}$ as follows:
    \[
        (v_p)_i = \left\{\begin{array}{ll}
            1 & \text{ if $i\leq N$ and the $i$th tensor factor of }v_p \text{ is either } \sigma_X \text{ or } \sigma_Y, \\
            1 & \text{ if $i > N$ and the $(i-N)$th tensor factor of }v_p \text{ is either } \sigma_Z \text{ or } \sigma_Y, \\
            0 & \text{ else.}
        \end{array}\right.
    \]
    Then two Pauli product matrices $p,p'\in \P_N$ commute if and only if $v_p \Lambda v_{p'}^T=0$, where $\Lambda$ is the ($2N\times 2N$)-matrix 
    \[
        \Lambda = \begin{pmatrix}
            0 & \Id_N\\
            \Id_N & 0\\
        \end{pmatrix}.
    \]
    Let $p_1,\dots,p_{N-k}$ be the independent generators of $S$. For any $i=1,\dots,N-k$ there exists a $p\in \P_N$ such that $pp_ip^{\dagger} = -p_i$ and $pp_jp^{\dagger} = p_j$ for all $j\neq i$. Indeed, consider the ($(N-k)\times 2N$)-matrix $P$ with rows $v_{p_1},\dots,v_{p_{N-k}}$; as the generators are independent, one can check that the rows of $P$ are linearly independent. Therefore, the linear system $P\Lambda x = e_i$, where $e_i$ is the $i$th standard basis vector, has a solution, say $s\in (\zz/2\zz)^{2N}$. Then we define $p\in P_N$ by $v_p = s^T$. Thus, for any $j\neq i$ we have $v_{p_j}\Lambda v_p = 0$, so $p$ and $p_j$ commute and $pp_jp^{\dagger}=p_j$. Moreover, $v_{p_i}\Lambda v_p = 1$, hence $pp_ip^{\dagger}=-p_i$.\par 
    The argument above shows that for any $\xx\in (\zz/2\zz)^{N-k}$, there exists $p_\xx\in \P_N$ such that $P^{\xx}_S = p_\xx P^{\mathbf{0}}_S p_\xx^{\dagger}$, proving the claim.\par \bigskip
    Let $\xx,\xx'\in (\zz/2\zz)^{N-k}$ be two distinct vectors, i.e.\ there exists an $i\in \{1,\dots,N-k\}$ such that $\xx_i\neq \xx'_i$. Then $\im(P^{\xx}_S)$ and $\im(P^{\xx'}_S)$ are orthogonal. Indeed, the Hilbert--Schmidt inner product between $P^{\xx}_S$ and $P^{\xx'}_S$ evaluates to
    \[
        \langle P^{\xx}_S, P^{\xx'}_S \rangle = \frac{1}{2^{2(N-k)}} \tr \left((\Id+p_i)(\Id-p_i)\prod_{j\neq i}(\Id+(-1)^{x_j}p_j)(\Id+(-1)^{x'_j}p_j)\right) = 0
    \]
    as $(\Id+p_i)/2$ and $(\Id-p_i)/2$ are projectors on complementary eigenspaces of $p_i$.\par
    Finally, observe that
    \[
        \sum_{\xx\in (\zz/2\zz)^{N-k}} P^{\xx}_S =  \Id_{2^N},  
    \]
    so the $2^{N-k}$ many vector spaces $\im(P^{\xx}_S)$ form an equidimensional partition of $\cc^{2^N}$, hence $\dim(V_S) = \dim(\im(P^{\mathbf{0}}_S)) = 2^k$.
\end{proof}

It is now apparent that Theorem \ref{thm:mainThmGraphStates} does not rely on the structure of the graph but generalises to stabilisers.

\begin{theorem}
    Let $S=\langle p_1,\dots,p_N\rangle \leq \P_N$ be generated by $N$ independent and commuting Pauli product matrices such that $-\Id_{2^N}\notin S$. Then $\gv(S)$ is an independence model on $N$ binary random variables after a linear change of coordinates.
\end{theorem}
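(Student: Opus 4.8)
The plan is to follow the proof of Theorem \ref{thm:mainThmGraphStates} essentially verbatim, observing that the graph played no essential role there: the only features of the Hamiltonians $H_i$ that were used are that they pairwise commute, that they are independent Pauli product matrices, and that $-\Id_{2^N}\notin\langle H_i\rangle$. In the graph setting commutativity was extracted from the combinatorial condition \eqref{equ:matrixIndMod}'s precursor; here all three properties are hypotheses, so the argument transfers directly. First I would simultaneously diagonalise the commuting family $S=\langle p_1,\dots,p_N\rangle$: since the $p_i$ pairwise commute they are simultaneously diagonalisable, giving an invertible $U$ and diagonal matrices $D_i = U^{-1}p_iU$, and, exactly as in the discussion preceding the theorem, $\gv(S) = U\cdot X_A\cdot U^{-1}$, where $A$ is the $(N\times 2^N)$-matrix whose $i$th row is the diagonal of $D_i$. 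It therefore suffices to show that $X_A$ is the independence model, i.e.\ that the columns of $A$ are precisely the $2^N$ vertices of the hypercube $\{\pm 1\}^N$, each occurring exactly once.

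This reduction is where Lemma \ref{lem:appStab} does the work. Each $p_i$ has eigenvalues $\pm 1$, so for every sign pattern $\mathbf{s}\in\{\pm 1\}^N$ we may consider the simultaneous eigenspace $V_{\mathbf{s}} = \{v : p_i v = s_i v \text{ for all } i\}$, and each basis vector of $V_{\mathbf{s}}$ contributes one column $\mathbf{s}$ to $A$. By Remark \ref{rem:eigenspaceLemma}, imposing each successive independent generator halves the dimension of the joint eigenspace; starting from $\cc^{2^N}$ and imposing all $N$ generators leaves every $V_{\mathbf{s}}$ of dimension $2^{N-N}=1$. Formally this is Lemma \ref{lem:appStab} with $k=0$, which yields $\dim V_{(+1,\dots,+1)} = 2^0 = 1$, where the independence hypothesis guarantees that the $2^N$ eigenspaces are genuinely distinct and $-\Id_{2^N}\notin S$ guarantees the all-$(+1)$ eigenspace is nonzero. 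Consequently each vertex $\mathbf{s}$ of $\{\pm 1\}^N$ appears as a column of $A$ exactly once, so $A$ coincides up to a permutation of columns with the matrix in \eqref{equ:matrixIndMod}, and $X_A$ is the independence model on $N$ binary random variables; the linear change of coordinates $M\mapsto U^{-1}MU$ then finishes the proof.

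I expect the only genuine subtlety to be extending Lemma \ref{lem:appStab} from the single all-$(+1)$ eigenspace it addresses to all $2^N$ joint eigenspaces. The lemma as stated computes $\dim V_S$ only for the $(+1)$-eigenspace, so one must argue, as indicated in Remark \ref{rem:eigenspaceLemma}, that for each $\mathbf{s}$ there is a Pauli matrix conjugating the generators to flip the prescribed signs — such an element exists by the symplectic-form computation $P\Lambda x = e_i$ carried out in the proof of Lemma \ref{lem:appStab} — so that $V_{\mathbf{s}}$ is carried isometrically to $V_{(+1,\dots,+1)}$ and hence is likewise one-dimensional. Everything after the eigenspace count is the same linear-algebra bookkeeping already performed in Theorem \ref{thm:mainThmGraphStates}, with the harmless difference that $U$ may be complex when some $p_i$ involves $\sigma_Y$.
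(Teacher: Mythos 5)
Your proposal is correct and follows exactly the route the paper takes: the paper's own proof is the one-line observation that the argument for Theorem \ref{thm:mainThmGraphStates} generalises immediately via Lemma \ref{lem:appStab}, and you have simply spelled out that argument (simultaneous diagonalisation, reduction to the toric variety $X_A$, and the eigenspace-halving count showing the columns of $A$ are the $2^N$ hypercube vertices). Your identification of the one genuine subtlety --- promoting the dimension count from the all-$(+1)$ eigenspace to every joint sign eigenspace via the conjugation constructed in the proof of Lemma \ref{lem:appStab} --- is precisely the content of Remark \ref{rem:eigenspaceLemma}, so nothing is missing.
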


\begin{proof}
    The proof of Theorem \ref{thm:mainThmGraphStates} immediately generalises to this situation by using Lemma \ref{lem:appStab}.
\end{proof}

\end{document}